\DeclareRobustCommand{\SkipTocEntry}[5]{}
\numberwithin{equation}{section}
\newtheorem{theorem}{Theorem}[section]
\newtheorem*{theorem*}{Theorem}
\newtheorem{lemma}[theorem]{Lemma}
\newtheorem{corollary}[theorem]{Corollary}
\newtheorem{definition}[theorem]{Definition}
\newtheorem{prop}[theorem]{Proposition}
\newtheorem*{conjecture*}{Conjecture}
\newtheorem*{idea*}{Main idea}
\theoremstyle{remark}
\newtheorem{remark}[theorem]{Remark}
\theoremstyle{remark}
\theoremstyle{remark}
\newtheorem*{notation*}{Notation}
\theoremstyle{remark}
\newtheorem*{conventions*}{Conventions}
\newcommand{\R}{\mathbf{R}}
\newcommand{\C}{\mathbf{C}}
\newcommand{\Hom}{\mathrm{Hom}}
\newcommand{\id}{\text{id}}
\newcommand{\I}{\mathcal{I}}
\renewcommand{\O}{\mathscr{O}}
\newcommand{\Spec}{\mathrm{Spec}\,}
\newcommand{\T}{\mathscr{T}}
\newcommand{\D}{\mathscr{D}}
\newcommand{\M}{\mathcal{M}}
\newcommand{\F}{\mathscr{F}}
\newcommand{\Sp}{\mathrm{Sp}}
\newcommand{\Mscr}{\mathscr{M}}
\newcommand{\J}{\mathscr{J}}
\newcommand{\widehatotimes}{\,\,\widehat{\otimes}\,\,}
\newcommand{\Coh}{\mathbf{Coh}}
\newcommand{\coh}{\mathrm{coh}}
\newcommand{\Spf}{\mathrm{Spf}\,}
\DeclareMathOperator{\Ext}{\mathrm{Ext}}
\newenvironment{thmnum}
 {\begin{enumerate}[label=\upshape(\arabic*),ref=\thetheorem.\arabic*]}
 {\end{enumerate}}
\newcommand{\sbullet}{} 
\DeclareRobustCommand\sbullet{%
  \mathord{\mathpalette\sbullet@{0.75}}%
}
\newcommand{\sbullet@}[2]{%
  \vcenter{\hbox{\scalebox{#2}{$\m@th#1\bullet$}}}%
}
\newcommand{\newabstract}[1]{%
  \par\bigskip
  \csname otherlanguage*\endcsname{#1}%
  \csname captions#1\endcsname
  \item[\hskip\labelsep\scshape\abstractname.]
}
\title{Completions of complexes of differential modules on singular schemes}
\author{Bruno Bori{\'c}}
\address{Department of Mathematics, Faculty of Natural Sciences, University of Zagreb, Bijeni{\v c}ka Cesta 30, 10000 Zagreb, Croatia}
\email{bboric.math@pmf.hr}
\author{Dalton A R Sakthivadivel}
\address{Department of Mathematics, CUNY Graduate Centre, 365 Fifth Avenue, New York, NY 10016}
\email{dsakthivadivel@gc.cuny.edu}
\date{\today}
\subjclass[2020]{14B05, 14F10, 58H10}
\begin{document}

\begin{abstract}

Spencer cohomology theory studies the cohomology of chain complexes of modules over the ring of differential operators $\mathscr{D}$ of a smooth analytic space. In this paper we give a generalisation of Spencer cohomology suitable for singular schemes of finite type over a field. Our motivation was a conjecture of Vinogradov concerning the homological properties of differential operators on singular affine varieties; namely, that complexes of certain such operators are acyclic if and only if the variety is smooth. We will provide a negative answer to Vinogradov's conjecture as stated. In principle Vinogradov's conjecture can also be posed for the Spencer complex of a general $\mathscr{D}$-module---however the answer is trivial, since singularities prohibit a definition of Spencer cohomology with any good properties. Our main result will be the construction of a Spencer complex on a large class of singular schemes which is suitable as a cohomology theory for the space. Following this we are able to ask the same question as Vinogradov in this case, where we give a more positive answer. Our main technique draws from Hartshorne's construction of de Rham cohomology by formal completion.

\end{abstract}

\maketitle


\section{Introduction}

Associated to any smooth analytic space $X$ and vector bundle $E$ on it are invariants coming from the chain complex
\begin{equation}\label{jet-complex-eq}
    0 \to E \to \J^r_X(E) \to \T^*_X \otimes_{\O_X} \J_X^r(E) \to \ldots \to \bigwedge^n \T^*_X \otimes_{\O_X} \J^r_X(E)\to 0,
\end{equation}
introduced by Spencer in \cite{spencer1969overdetermined}. We will refer to this as the {\it topological Spencer complex}. This followed a general interest in constructing algebraic invariants from the jet bundle of a smooth manifold representing the obstructions to solubility of overdetermined systems partial differential equations on the spaces, beginning with \cite{spencer1962deformation} and the 1964 thesis of Quillen \cite{quillen1964formal}. This cohomology turns out to be a very generic receptacle for cohomology theories which resolve some sheaf of constant functions, wherein it studies the differential in the theory as such a differential operator. Following this Malgrange \cite{malgrange1966cohomologie} and Johnson \cite{johnson1971some} discovered that the cohomology theory of Spencer can be translated into the language of sheaves of modules over the ring of differential operators $\D_X$. From their work we have the {\it algebraic Spencer complex}
\begin{equation}\label{algebraic-spencer}
    0 \to \D_X \otimes_{\O_X}\bigwedge^n \T_X \to \cdots\to \D_X \otimes_{\O_X}\bigwedge^2 \T_X \to \D_X \otimes_{\O_X}\T_X \to \D_X \to 0.
\end{equation}
This was used by Johnson to compute Spencer's cohomology groups in terms of $\Ext$ groups of carefully chosen $\D$-modules. The algebraic theory is adjoint to the topological theory in a precise sense investigated by Johnson ({\it ibid}).

Johnson notes that the theory wholly extends to the case of a smooth algebraic variety; however, the jump from the smooth case into the general case proved less fruitful than initially imagined as $\D$-modules have a number of undesirable properties on singular spaces \cite{bernvstein1972differential}. The algebraic theory has never properly been extended to singular spaces like general algebraic varieties. 

On the other hand it is immediate that the topological theory is definable in the algebraic category, taking $\T^*_X$ to be the sheaf of K\"ahler differentials of an algebraic variety $X$ and $\J^r_X(E)$ the jet sheaf of some sheaf $E$ on $X$ (note that this exists on a singular algebraic variety; see {\it e.g.} \cite[\S16]{ega-iv}, where no smoothness assumptions are used), and may or may not have good properties. This saw important usage in the work of Vinogradov in homological algebra concerning invariants produced by derivations of equations in modules, where he studied many cases of the topological complex on singular affine varieties. Based on this work, in \cite{vinogradov1979some} he conjectured that a more general version of the topological complex is acyclic on an affine variety if and only if the variety is smooth. In this sense the topological theory is still only expected to be a meaningful cohomology on smooth spaces, but the question has remained unresolved as such. 

In this paper we will build a theory of Spencer cohomology on singular schemes. The first result of this paper concerns this conjecture. We find a straightforward answer to the question posed by Vinogradov in the form of a class of counterexamples to the {\it only if} direction. In particular, a scheme being equipped with a certain endomorphism on its de Rham differentials is sufficient for acyclicity of the topological complex (Theorem \ref{only-if-thm}). In this sense we view the topological Spencer complex as a coarser theory than just detecting smoothness would allow. 

In later sections and for our main result, we seek to refine this measure by giving a suitable definition of an algebraic Spencer complex (and in turn Spencer cohomology) for any finite type scheme over a field of characteristic zero, with no assumptions on the types of singularities it may have other than imbeddability into a smooth scheme. The case of coherent $\D_X$-modules will prove simple as the formal completion is exact, meaning the homological properties of the $\D_X$-module are preserved. This is developed in \S\ref{spencer-complex-section} and culminates in Definition \ref{spencer-complex-def}. The case where we do not have coherent $\D_X$-modules demands more conceptual sophistication, but we see they are technically more pleasant. We will take the derived completion in this case. Some preliminaries will be required to establish what the properties of derived completions of $\D_X$-modules are \S\ref{theory-and-properties-sec} and after this interlude we will give a parallel of Definition \ref{spencer-complex-def} in the derived setting in \S\ref{derived-spencer-sec}. We will discuss what information about the space it contains and what makes it a suitable replacement of Spencer cohomology. Throughout we will find interesting connections to local cohomology obtained by the application of Kashiwara's equivalence, suggesting the completion is both (i) a natural viewpoint to adopt and (ii) a richer way to look at Spencer cohomology.

\section{Vinogradov's conjecture}\label{vinogradov-sec}

By $k$ we will denote a field of characteristic zero, but not necessarily algebraically closed. All rings will be assumed Noetherian unless stated otherwise. All schemes will be locally Noetherian and of finite type over $k$. We will call a (non-zero) Noetherian ring {\it Cohen--Macaulay} (CM) if, when regarded as a module over itself, every localisation at a prime ideal has equal dimension and depth. A scheme (as per our conventions) $X$ is CM if at every point the local ring $\O_{X,x}$ is CM. This and other definitions will be as according to the excellent textbook account \cite[\S 2]{bruns1998cohen}.

In \cite{vinogradov1979some} a version of \eqref{jet-complex-eq} is given for more general complexes of differential modules. Vinogradov's complex depends on an index $\sigma$ consisting of a (possibly infinite) sequence of positive integers, such that $\sigma=(s_1, s_2, s_3, \ldots)$ corresponds to adding differential operators of order $s_i$ in the $i$-th prolongation of $E$. For $\sigma = (1, \ldots, 1)$, $\abs{\sigma} = r$, his complex is the Spencer complex with jet sheaf of order $r$. Vinogradov conjectures that his complex is acyclic on an affine variety for all sequences $\sigma$ if and only if the variety is smooth. The {\it if} direction follows easily:

\begin{theorem}\label{if-thm}
    Let $X$ be a scheme (not necessarily affine) under the conventions above. If $X$ is smooth, then \eqref{jet-complex-eq} is acyclic for all $\sigma$.
\end{theorem}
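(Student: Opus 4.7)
The plan is to exploit smoothness to reduce the question to a local model over affine space, where the Spencer complex admits an explicit description as a Koszul-type complex that is contractible. First I would note that acyclicity of \eqref{jet-complex-eq} is a stalk-local property, and that both $\T^*_X$ and the jet sheaves $\J^r_X(E)$ are compatible with étale base change (see \cite[\S16]{ega-iv}). Smoothness of $X$ over $k$ therefore allows us to reduce to the case $X = \A^n_k = \Spec k[t_1, \ldots, t_n]$ with $E$ free; equivalently, after completion at a closed point, to the power series ring $k\llbracket t_1, \ldots, t_n \rrbracket$.

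In this local model the cotangent sheaf is free with basis $dt_1, \ldots, dt_n$, and the $r$-th jet sheaf is canonically isomorphic to $E \otimes_k k[\eps_1, \ldots, \eps_n]/(\eps)^{r+1}$, where $\eps_i$ records the class of $t_i$ in the conormal of the diagonal. Under this identification the Spencer differential on \eqref{jet-complex-eq} becomes a Koszul-type differential built from the dual pairing of $\{\partial/\partial t_i\}$ with $\{dt_i\}$. Vinogradov's more general complex for an arbitrary sequence $\sigma$ admits a parallel description, with iterated jet bundles in place of $\J^r_X(E)$ and a corresponding shifted Koszul differential.

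Acyclicity then follows from an algebraic Poincaré lemma: one writes down a contracting homotopy that trades a factor of $dt_i$ for a factor of $\eps_i$, i.e.\ raises the jet order by one, giving an explicit splitting in positive degrees; the cokernel at the leftmost term is identified with $E$ through the zeroth jet evaluation. The main obstacle is not the exactness itself—which is classical for $\sigma = (1, 1, \ldots)$ and well documented in \cite{spencer1969overdetermined, malgrange1966cohomologie, quillen1964formal}—but the combinatorial bookkeeping required to describe Vinogradov's general prolonged complex for arbitrary $\sigma$ and to recognise it as a Koszul-type complex of the form above. Once this identification is in place, the contracting homotopy argument goes through uniformly in $\sigma$.
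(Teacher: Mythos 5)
Your proposal is correct and follows essentially the same route as the paper: both use smoothness to pass to (\'etale-)local coordinates where the tangent/cotangent sheaf is free, identify the Spencer differential as a Koszul-type differential (the paper via Johnson's formula with the bracket term vanishing on a commuting basis, you via the explicit truncated-polynomial model of the jet sheaf and a contracting homotopy), and conclude exactness from the standard Koszul argument. The extension to arbitrary $\sigma$ is treated with comparable brevity in both arguments---the paper simply changes the coefficient module of $\omega$, while you flag the same bookkeeping---so there is no substantive divergence.
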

\begin{proof}
    Since $X$ is smooth, $\T_X$ is locally free, so that at any stalk there is an \'etale-local set of coordinates in which $\partial_1, \ldots, \partial_n$ is a commuting basis. Now we will analyse \eqref{jet-complex-eq} at the stalks. According to \cite{johnson1971some} the differential of any jet sheaf-valued $(n-1)$-form $\omega$ in \eqref{jet-complex-eq} satisfies
    \begin{align*}
        (\dd{\omega})(\xi_1, \ldots, \xi_n) = &\sum_{j} (-1)^{j-1} \xi_j \cdot \omega(\xi_1, \ldots, \hat\xi_j, \ldots, \xi_n) \\&+ \sum_{1 \leqslant i < j \leqslant n} (-1)^{i+j} \omega([\xi_i, \xi_j], \xi_1, \ldots, \hat \xi_i, \ldots \hat \xi_j, \ldots, \xi_n).
    \end{align*}
    By linearity of the differential it is sufficient to evaluate it on a basis of $\T_X$. The commutator of basis elements vanishes; hence, from non-degeneracy, 
    \[
    \omega([\partial_i, \partial_j], \partial_1, \ldots, \hat \partial_i, \ldots, \hat \partial_j, \ldots, \partial_n)
    \]
    vanishes for all $i, j$. We are left with the first term of the differential. Since $\partial_1, \ldots, \partial_n$ generates $\T_{X,x}$, what remains is the differential of a Koszul complex of an isomorphism of free modules of rank $n$. By standard results such a complex is exact. The rest of the claim follows for all $\sigma$ by changing the value of $\omega$; namely, from differential forms valued in one sequence of prolongations to any other. 
\end{proof}

However, there are ultimately counterexamples in the singular case.

\begin{theorem}\label{only-if-thm}
    Let $X$ be scheme (not necessarily smooth) under the conventions above and $L_{\xi}$ the Lie derivative along a derivation $\xi$. If there exists a global derivation on $\Xi$ on $X$ such that for $i\geqslant 1$, the $\O_X$-linear endomorphism
    \[
        L_\Xi\colon \Omega_X^i \otimes_{\O_X} \J^r_X(E) \longrightarrow \Omega_X^i \otimes_{\O_X} \J^r_X(E)
    \]
    is an automorphism, then \eqref{jet-complex-eq} is exact. 
\end{theorem}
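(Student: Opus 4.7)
The plan is to construct an explicit contracting chain homotopy on the complex using Cartan's magic formula $L_\Xi = d\iota_\Xi + \iota_\Xi d$, where $\iota_\Xi$ denotes interior multiplication by $\Xi$. First, on each term $\Omega^i_X \otimes_{\O_X} \J^r_X(E)$ with $i \geq 1$, I would define the $\O_X$-linear contraction
\[
    \iota_\Xi\colon \Omega^i_X \otimes_{\O_X} \J^r_X(E) \longrightarrow \Omega^{i-1}_X \otimes_{\O_X} \J^r_X(E), \qquad (\iota_\Xi \omega)(\xi_1, \ldots, \xi_{i-1}) = \omega(\Xi, \xi_1, \ldots, \xi_{i-1}),
\]
and verify Cartan's magic formula by a direct multilinear computation against the explicit expression for $d$ recalled in the proof of Theorem \ref{if-thm}. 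The identity depends only on skew-symmetry and the Leibniz rule for the action of $\Xi$ on $\J^r_X(E)$, neither of which requires smoothness of $X$.

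With Cartan's formula in hand, the key observation is that $L_\Xi$ commutes with $d$: applying Cartan twice yields $L_\Xi d = d\iota_\Xi d = d L_\Xi$, using $d^2 = 0$. Consequently $L_\Xi^{-1}$ also commutes with $d$ in positive degrees, and the operator $h := L_\Xi^{-1} \iota_\Xi$ satisfies
\[
    dh + hd = L_\Xi^{-1}(d\iota_\Xi + \iota_\Xi d) = \id
\]
on $\Omega^i_X \otimes_{\O_X} \J^r_X(E)$ for every $i \geq 1$. This is a contracting chain homotopy, which immediately yields exactness of \eqref{jet-complex-eq} at every position $i \geq 1$. For the remaining exactness at $E$ and at $\J^r_X(E)$, namely injectivity of the jet prolongation $j^r\colon E \to \J^r_X(E)$ together with the equality $\ker d_0 = \mathrm{im}(j^r)$, I would appeal to the universal property of the jet sheaf, which survives on any locally Noetherian scheme without smoothness hypotheses.

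The main obstacle I anticipate is the careful verification of Cartan's formula and the commutation $[L_\Xi, d] = 0$ on jet-valued forms over a possibly singular base. The formula for $d$ from the proof of Theorem \ref{if-thm} mixes a first-order derivation term with a Lie-bracket term, so one must keep meticulous track of signs and Koszul conventions; moreover, the action of derivations on $\J^r_X(E)$ is subtle on a singular space. However, no genuine conceptual difficulty arises, because the hypothesis supplies $L_\Xi$ as a well-defined $\O_X$-linear endomorphism compatible with the Leibniz rule, so the standard identities of Cartan calculus transfer verbatim.
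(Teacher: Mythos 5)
Your proposal is essentially the paper's own argument: verify Cartan's formula $L_\Xi = d\iota_\Xi + \iota_\Xi d$ on jet-valued forms, use the hypothesised invertibility of $L_\Xi$ in degrees $i\geqslant 1$ together with $[L_\Xi,d]=0$ to produce the contracting homotopy $h=\iota_\Xi L_\Xi^{-1}$ (the paper's ordering of the composite, which agrees with yours since $L_\Xi$ commutes with $\iota_\Xi$). Your additional discussion of exactness at $E$ and at $\J^r_X(E)$ goes slightly beyond the paper, which contents itself with a contracting homotopy in positive degrees.
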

\begin{proof}
    One can verify with a brief computation that 
    \[
        L_\Xi =\dd{}\circ\iota_\Xi + \iota_\Xi \circ \dd{}
    \]
    on $\Omega_X^{\sbullet} \otimes_{\O_X} \J^r_X(E)$ with differential given above. For $i\geqslant 1$, the hypothesis makes $L_\Xi$ invertible, so that we may define
    \[
    h_i \coloneq \iota_\Xi \circ L_\Xi^{-1} \colon \Omega_X^i \otimes_{\O_X} \J^r_X(E) \to \Omega_X^i \otimes_{\O_X} \J^r_X(E).
    \]
    Now on $\Omega_X^i \otimes_{\O_X} \J^r_X(E)$ we compute
    \[
        \dd\circ h_i + h_{i+1} \circ \dd = \dd \circ \iota_\Xi \circ L_\Xi^{-1} + \iota_\Xi \circ L^{-1}_\Xi \circ \dd = L_\Xi L_\Xi^{-1}
    \]
    such that $h=\{h_i\}_{i\geqslant 1}$ is a contracting homotopy in positive degrees.
\end{proof}

Examples of affine schemes that satisfy these hypotheses include affine cones with an Euler vector field: if the $\mathbf{G}_m$-action gives a weight decomposition on each $\Omega_X^i$ with all weights nonzero for $i\geqslant 1$, then the Lie derivative along the Euler vector field acts diagonally with invertible eigenvalues on $\Omega_X^{\geqslant 1}$, so the hypothesis holds for the de Rham differentials. Taking sheaf-valued differential forms modifies nothing.

\begin{corollary}
    If $X$ is an affine scheme with such a vector field, then the complex of Vinogradov for $\sigma = (1, 1, 1, \ldots)$ is acyclic.
\end{corollary}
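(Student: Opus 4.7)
The plan is to verify that the hypothesis of Theorem \ref{only-if-thm} holds for the vector field $\Xi$ provided by the corollary's hypothesis, whereupon the conclusion will be immediate. By the discussion preceding the corollary, the assumption on $X$ is precisely that $L_\Xi$ restricts to an automorphism of each $\Omega^i_X$ for $i \geqslant 1$, so what remains is to propagate this invertibility to the tensor product $\Omega^i_X \otimes_{\O_X} \J^r_X(E)$.

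Both tensor factors carry natural $\mathbf{G}_m$-equivariant (equivalently, $\Xi$-invariant) structures, and $L_\Xi$ acts on each weight space as multiplication by the weight. The nonvanishing of weights on $\Omega^i_X$ for $i\geqslant 1$ is given. For the jet factor, I would argue in the canonical case $E = \O_X$ as follows: on an affine cone over a projective base, $\O_X$ is nonnegatively graded and hence so is $\J^r_X(\O_X)$, while $\Omega^{\geqslant 1}_X$ has strictly positive weights (only constants lie in degree zero, and $d$ kills them). The total weight on the tensor is therefore strictly positive, and $L_\Xi$ remains invertible there.

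With this verified, Theorem \ref{only-if-thm} applies directly to give acyclicity of \eqref{jet-complex-eq} for every finite order $r$. Passing to the Vinogradov complex indexed by $\sigma = (1,1,1,\ldots)$, which is the analogue of \eqref{jet-complex-eq} with the infinite jet sheaf $\J^\infty_X(E) = \varprojlim_r \J^r_X(E)$ in place of $\J^r_X(E)$, I would observe that the contracting homotopy $h_i = \iota_\Xi \circ L_\Xi^{-1}$ constructed in the proof of Theorem \ref{only-if-thm} is compatible with the transition maps of the inverse system of jets and therefore lifts to a contracting homotopy on the infinite-jet complex directly. The only step requiring genuine care is the weight compatibility check in the second paragraph; the remainder is an immediate appeal to the preceding theorem.
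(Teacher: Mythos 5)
Your proposal is correct and follows the same route the paper intends: the corollary is meant to be an immediate application of Theorem \ref{only-if-thm} to the Euler-vector-field example described just before it. Where you add value is in justifying two steps the paper compresses into the single phrase ``taking sheaf-valued differential forms modifies nothing'': first, your weight bookkeeping on the tensor factor---$\Omega^{\geqslant 1}_X$ strictly positively graded on a cone, $\J^r_X(\O_X)$ nonnegatively graded, hence the tensor product has strictly positive weights and $L_\Xi$ is invertible there---is exactly the verification needed to place the hypothesis of Theorem \ref{only-if-thm}, and second, your observation that the contracting homotopy $\iota_\Xi \circ L_\Xi^{-1}$ is compatible with the jet transition maps and therefore descends to the inverse limit correctly handles the fact that $\sigma=(1,1,1,\ldots)$ is an \emph{infinite} sequence, so that acyclicity of the limit complex does not hinge on exactness of $\varprojlim$. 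The only caveat is that your weight argument for the jet factor is carried out for $E=\O_X$; for a general $E$ one needs a $\mathbf{G}_m$-equivariant structure on $E$ with weights bounded below, which is implicitly part of what ``such a vector field'' must mean for the corollary to hold in that generality. This is a restriction the paper itself does not acknowledge, so you should state it as a hypothesis rather than treat it as a gap in your argument.
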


In fact we can make the same argument for any $\sigma$, by the concluding observation in the proof of Theorem \ref{if-thm}.



There is an adjoint version of \eqref{jet-complex-eq} due initially to \cite{malgrange1966cohomologie}. Let $\D_X$ be the ring of derivations on a smooth scheme $X$, and define the Spencer complex of $\D_X$ (following \cite{johnson1971some}; see also \cite[\S1.5]{hotta2007d}) as the resolution of $\O_X$
\[
    \Sp^{\sbullet}(\D_X) = \left[0 \to \D_X \otimes_{\O_X}\bigwedge^n\T_X \to \cdots \to\D_X \otimes_{\O_X}\bigwedge^2\T_X \to \D_X\otimes_{\O_X}\T_X \to \D_X \to 0\right]
\]
with differential in degree $k$
\begin{align*}
    \dd(m \otimes \xi_1 \wedge \ldots \wedge \xi_k) &= \sum_{j=1}^k (-1)^{j-1}(m\xi_j)\otimes \xi_1 \wedge \ldots \wedge \widehat{\xi}_j \wedge \ldots \wedge \xi_k \\ & + \sum_{1 \leqslant i < j \leqslant k} (-1)^{i+j} m\otimes [\xi_i, \xi_j] \wedge \xi_1 \wedge \ldots \widehat{\xi}_i \wedge \ldots \wedge \widehat{\xi}_j \wedge \ldots \wedge \xi_k.
\end{align*}

\begin{remark}\label{resolution-remark}
    The Spencer complex is a resolution of $\O_X$ (as a left $\D_X$-module) by locally free $\D_X$-modules because there is a $\D_X$-linear augmentation map $\D_X \to \O_X$ sending a differential operator $\xi$ to its evaluation at a constant $\xi(1)$; after that, the complex is locally a Koszul complex, from which exactness follows by the same argument as in Theorem \ref{if-thm}.
\end{remark}

This complex is adjoint in the sense that when $\T_X$ is locally free, one has
\[
    \Hom_{\D_X}\left(\D_X\otimes_{\O_X}\bigwedge^i \T_X, J^\infty(E)\right) = \bigwedge^i \T^*_X \otimes_{\O_X} J^\infty(E).
\]
with differential also dualised. 

Since the hom functor is not exact, the question for the algebraic Spencer complex is not implied by our results on the topological Spencer complex, making it interesting to pose Vinogradov's conjecture in that case. Moreover, the proof of Theorem \ref{only-if-thm} placed no assumptions on the class of singularity, but we expect the algebraic Spencer complex to be more sensitive to such data in the following sense. One can also consider any left $\D_X$-module $\M_X$ and obtain the complex
\[
    \Sp^{\sbullet}(\M_X) = \left[0 \to \M_X \otimes_{\O_X} \bigwedge^n\T_X \to \cdots \to \D_X \otimes_{\O_X} \bigwedge^2\T_X \to \D_X\otimes_{\O_X} \T_X \to \D_X \to 0\right]
\]
by properties of the tensor product. Let $X$ be a scheme of dimension $n$. There is an isomorphism between the Spencer complex of $\M_X$ and the $n$-shifted de Rham complex of $\M_X$, 
\[
    \Sp^{\sbullet}(\M_X) \simeq \Omega^{n+\sbullet}_X \otimes_{\O_X} \M_X
\]
which for $\M_X = \J^r_X(E)$ recovers \eqref{jet-complex-eq} on the right-hand side (up to a degree shift). Note that these are subtly different constructions---the de Rham cohomology of a $\D_X$-module arises from the pushforward of the $\D_X$-module to the point, `integrating out' the $n$-dimensional fibre directions (since this operation indexes the complex from $-n$ to zero, this accounts for the degree shift). That aside, we can see Spencer's original cohomology is equivalent to $\Sp^{\sbullet}(\J^r_X(E))$ up to said degree shift, in the sense that Johnson's isomorphism describes local resolutions of $\D$-modules on $X$ using the Spencer complex, whilst the shifted de Rham complex describes how the same cohomological information appears through global integration pairings extracting de Rham cohomology.

\begin{remark}
    Since the $\D_X$-module action term $m\xi_j$ in the differential is given by $\phi^rf\cdot\xi = \phi^r(\xi_j(f))$, we can interpret this cohomology as measuring obstructions to the integrability of $r$-jets by verifying compatibilities along a polyvector field ({\it i.e.}, agreement of differentiation in the directions prescribed by some exterior power of derivations). This is in the sense that the image of the differential is in the kernel of the term in degree less one precisely when different directions agree such that they cancel each other, meaning a jet is the derivative of a jet in degree less one when it respects compatibilities like Clairaut's theorem and is not merely a formal expression. Correspondingly, we recover Spencer's original motivation; namely, the study of the existence of solutions to partial differential equations, and its applications to notions of higher torsion invariants of manifolds (see {\it e.g.} \cite{guillemin1965integrability, guillemin1966deformation}).
\end{remark}

This is most obvious for the dualising sheaf $\M_X = \omega_X$. Using properties of smoothness of $X$, since there is a perfect pairing of polyvector fields on $\omega_X$ the interior product,
\[
\omega_X \otimes \bigwedge^i \T_X \xrightarrow{\sim} \Omega^{n-i}_X,
\]
$\Sp^{\sbullet}(\omega_X)$ reduces to the de Rham complex. This is now acyclic by the Poincar\'e lemma. Being CM implies various good properties from the viewpoint of algebraic topology; namely, it is the minimal condition for which intersection theory \cite{hochster1978some, fulton} and available duality theorems \cite{hartshorne1966residues, kovacs2013singularities} work `as expected'. This is evident in the algebraic complex. Working in the derived category, we can consider chain complexes of $\D_X$-modules. On a scheme with worse singularities than CM we do not have concentration of the dualising complex; instead we have the resolution 
\[
    \mathrm{Tot}\big(\omega^{\sbullet}_X \otimes \Sp(\D_X^{\sbullet})\big)
\]
which is not the de Rham complex in general. 

In general on a smooth scheme it is again true that locally the Spencer complex of a $\D_X$-module is a Koszul complex such that the same acyclicity argument applies. However, in contrast to \eqref{jet-complex-eq}, $\D_X$-modules (and hence their Spencer complexes) are poorly behaved on singular schemes---and in particular, there is in general no such pairing, owing to the failure of Poincar\'e duality. One concludes that the adjoint version of Spencer cohomology is more sensitive to the topology of the variety and may elucidate why CM is a sufficient property to have a cohomology theory with desirable properties like acyclicity and Poincar\'e duality. We will proceed by investigating what algebraic Spencer cohomology actually measures in the singular case and why it is not a resolution of the structure sheaf, and how a sensible definition can be created in replacement. 

\section{The na\"ive treatment of the singular case}\label{naive-singular-sec}

In this section we will ask what the pushforward of Spencer cohomology actually computes when we have a closed immersion of a singular scheme into a smooth one, and motivate a different construction with better homological properties. 

Let $X, Y$ be smooth $k$-schemes and consider a $\D_Y$-module $\M_Y$ and the sheaf of differential operators on $X$, $\D_X$. There is a natural pushforward operation of $\M_Y$ along $f\colon Y \to X$ by first pulling back the sheaf of differential operators on $X$, tensoring $\M$ with $f^*\D_X$ over $\D_Y$, and then taking the sheaf-theoretic direct image. There exists a derived counterpart to this operation, denoted by
\[
    f_+\M_Y \coloneqq {\bf R}f_*(\M_Y \otimes^{\bf L}_{\D_Y} f^*\D_X).
\]
By Kashiwara's equivalence the (derived) direct image is a $\D_X$-module with support on $Y$. 

Consider the pushforward of a $\D_X$-module $\M$ by $g\colon X \to \Spec k$, and note that $g^*\D_{\Spec k} = \O_X$, such that
\[
    g_+\M = {\bf R}g_*(\M \otimes^{\bf L}_{\D_X} \O_X).
\]
Applying the Spencer complex of $\D_X$ to resolve the derived tensor product (see Remark \ref{resolution-remark}), we have 
\[
    g_+\M \simeq {\bf R}g_*(\M \otimes_{\D_X} \Sp^{\sbullet}(\D_X)) =  {\bf R}g_*\Sp^{\sbullet}(\M)
\]
and hence the pushforward to the point computes the cohomology of $X$ with coefficients in $\M$. Let $f\colon Y \to X$ be a morphism of $k$-schemes with $X,Y$ smooth. From the projection formula there exists a natural isomorphism between the push forward by composite maps and the composition of pushforward maps. As such we can consider the resolution of the composite derived pushforward $(g\circ f)_+\M_Y$,
\begin{equation}\label{pushed-forward-complex-eq}
    \Sp^{\sbullet}(\M_X) \coloneqq \left[ \M_X \otimes_{\O_X}\bigwedge^n\T_X \to \cdots \to \M_X \otimes_{\O_X}\bigwedge^2\T_X \to \M_X \otimes_{\O_X}\T_X \to \M_X \right].
\end{equation}
One can show the composite pushforward computes the cohomology of $Y$. Let $h \colon Y \to \Spec k$ be the structure morphism of $Y$. Since both $X$ and $Y$ are $k$-schemes we have the fact that $g\circ f = h$, and we know that $h^*\D_{\Spec k} \simeq \O_Y$. Since $h_+\M_Y$ therefore computes the Spencer cohomology of $\M_Y$ on $Y$, we have the claim.

One says a quasi-coherent sheaf is {\it set-theoretically supported on a subscheme} if every section is annihilated by some power of the ideal of definition of the subscheme. Let $Y$ be singular. There are now a number of subtleties to the theory, but when $Y$ admits a closed imbedding into a smooth scheme $X$, one can use Kashiwara's equivalence to define a suitable category of $\D$-modules on a singular variety, whereby the pushforward is an equivalence between the category of $\D_Y$-modules on $Y$ and the subcategory of $\D_X$-modules set-theoretically supported on $Y$. Such a construction was perhaps first used in practice to handle $\D$-modules on singular subspaces by \cite{kashiwara1978holonomic}, and was known at least to Bernstein at around the same time.\footnote{This is mentioned, for instance, in his unpublished notes entitled ``Algebraic theory of $D$-modules''.} Note this is even true for imbeddings of cuspidal singularities into cuspidal $X$ by \cite{ben2004cusps}; moreover both agree with the intrinsic definitions of $\D_Y$-modules as crystals \cite{gaitsgory2011crystals} or as certain dg-algebras \cite{yang2022dmodules} (one can see this by using the fact that the semi-normalisation is smooth). In \cite{ben2004cusps}, certain universal homeomorphisms of CM varieties are used to control higher Ext groups and show the equivalence. 



In the general singular (imbeddable) case the pushforward becomes more delicate, and agrees with the previously defined direct image only up to an important homological point. Let $i\colon Y \to X$ be a closed imbedding with $Y$ singular and $\Mscr_Y$ denote the category of $\D_Y$-modules ($\Mscr_X$, $\D_X$-modules, respectively). There are functors
\[
    i^\natural\colon \Mscr_X \to \Mscr_Y, \quad i_\natural\colon \Mscr_Y \to \Mscr_X 
\]
satisfying the following property:
\begin{definition}
    There exists a $\D_X$-module supported on $i(Y)$ given by $\I_Y \D_X\backslash \D_X$ which is canonically identified with $\D_Y$ by taking its inverse image $f^\natural$; adjointly, the direct image $f_\natural$ of $\D_Y$ is identified with $\I_Y \D_X\backslash \D_X$ by Kashiwara's equivalence. This construction is independent of the imbedding and is local on affine neighbourhoods. The module $\D_Y$ has the property that for any $\D_Y$-module $\Hom(\D_Y, \M_Y)$ Similarly, we have 
    \[
    \M_Y = i^\natural(\I_Y \M_X \backslash \M_X) \quad i_\natural(\M_Y) = \I_Y \M_X \backslash \M_X 
    \]
    where $\M_X, \M_Y$ are objects of $\Mscr_X, \Mscr_Y$, respectively. 
\end{definition}

One can see \cite[\S A.2]{etingof2018poisson} for an account of the properties of this object.

Although this is a suitable definition of $\D_Y$, at this level of view, it lacks some desirable properties. In particular, with the direct image being now different, the argument above does not apply and it is no longer obvious we are resolving $\O_Y$. This is in the sense that set-theoretic support is equivalent to stalks being non-zero, so the stalks agree, meaning $i^\natural\T_Y$ is locally free on $i(Y)$ (as an $\O_Y$-module) if and only if $\T_Y$ was a locally free $\O_Y$-module in the first place. As such, the Koszul complex cannot be exact. 

In a precise sense the failure to resolve $\O_Y$ is because the Koszul complex detects homological information about the singularities through the irregularity of sections of $\T_Y$. Given the obvious augmentation map $\alpha\colon \T_Y \to \O_Y$ the degree zero homology of the Koszul complex is isomorphic to $\O_Y / \alpha(\T_Y)$ and the higher homology sheaves are supported on $V(\alpha(\T_Y))$. In many concrete situations these data coincide with known invariants controlling the local geometry around the singularity. For example, for a singular complex hypersurface in a smooth ambient variety over $\C$, the image of $\T_Y$ under evaluation is the Jacobian ideal, making $\O_Y/J$ the algebra whose dimension is the Milnor number of the singular fibre. Viewing the Spencer complex as a twist of the Chevalley--Eilenberg complex of the Lie algebra of derivations of $\O_Y$, more examples are available to us, following {\it e.g.} the results in \cite{elashvili2006lie}.

Moreover, the statement that sections are annihilated by some power of the ideal contains an ambiguity: in general there need not be one exponent which kills the entire sheaf. For this reason one might postulate an inverse limit over all powers of the ideal. In fact one knows already that the cohomology of a sheaf supported on a subspace uses the $I$-adic completion and was defined as such for exactly this reason (see {\it e.g.} the account in \cite{Hartshorne1967}). On the other hand, losing control on local cohomology obstructs us from certain good results such as \cite{ben2004cusps}. For this reason we claim the cohomology of such a $\D$-module should be thought of as a local cohomology due to the support condition, and that we should be taking the completion of the direct image in general. The aim of what is to follow will be to provide such a definition and consider its properties, especially with respect to singular spaces.

In addition to being a very natural construction, we will remark that this construction both uses the smoothness of $X$ to transfer certain nice properties to a $\D_Y$-module and the completion of $X$ along $Y$ to restore it to something with sole knowledge of $Y$. This closely parallels the construction of Hartshorne in \cite{hartshorne1975rham} and we will indeed see that for a certain $\D_Y$-module we recover his theory; in this sense we can view our theory as a generalisation of his construction to arbitrary cohomologies obtainable as the Spencer cohomology of some sheaf, simultaneously offering another viewpoint on why completion arises in his work. We will find that the smoothness transfers acyclicity properties to the completed complex such that it is a resolution of the formal completion of $\O_Y$, addressing the first obstruction to having a meaningful cohomology as well.

\section{Completed Spencer cohomology}\label{spencer-complex-section}

When it comes to constructing a complex on $Y$ which is acyclic, we have two competing {\it desiderata}: one is to compute the cohomology of $Y$, and the other is that $\D_Y$-modules are poorly behaved. The solution to the second is to use Kashiwara's equivalence and consider a $\D_X$-module with support on $Y$. However, when we take the pushforward of the dualising complex to $X$ and let it concentrate, if we restrict it to the singular subspace, it is no longer acyclic, and if we consider the entire sheaf, we are computing the cohomology of $X$. 

This is not surprising: on a singular scheme, the most basic example of de Rham cohomology $\Omega_Y^{\sbullet}$ is not a good resolution of $k_Y$, so we have no access to any cohomological properties of $Y$ using the na\"ive complex. Let $Y \hookrightarrow X$ be a closed embedding of schemes over a field of characteristic zero. One knows by the work of Grothendieck that when $Y$ is smooth, the hypercohomology of the de Rham complex is well-behaved \cite{grothendieck1966rham}, whereas when $Y$ is singular, we must consider the algebraic de Rham complex on the formal completion of $X$ along $Y$, in which case one can show it is independent of the choice of $X$ and enjoys various good properties \cite{hartshorne1975rham}.

Some motivation for why Hartshorne's argument works is as follows. In certain situations we can `restore' non-singularity by completion. One observes that singularities persist in the formal completion of a smooth scheme along a singular subscheme, since the formal neighbourhoods of the completion contain the ideal of definition of the singular subscheme. What is true instead is that the completion along a subscheme contains all of the infinitesimal normal data of the subscheme, making it like a formally smooth tubuluar neighbourhood; this can be thought of conceptually as modifying the sheaf of the space in a way that adds enough smoothness for the usual sheaf cohomology machinery one wants to use. However (and to reiterate) this space is still equivalent to the original subscheme in the sense that the underlying topological spaces are the same---making the cohomology meaningful. At a technical level the smooth ambient variety is simply a receptacle for a calculation on the crystalline site \cite{illusie1975report}---namely, by providing a scheme in which to find the tubular neighbourhood yielding our infinitesimal information.  In particular, the technical advantage is a formal Poincar\'e lemma providing the comparison theorem Hartshorne seeks, a form of which was known already to Grothendieck in the crystalline context \cite{grothendieck1968crystals}. 

In this section we will present a satisfactory definition of the Spencer cohomology of a singular scheme. We will opt to study the imbeddable case, leaving the generalisation to arbitrary schemes of finite type to a sequel. We will find that since $\O_X$ is coherent, the completion is not only exact but acts like a pullback to the underlying topological space of $Y$, making this cohomology meaningful.

\subsection{Smooth schemes and coherent sheaves}

Let $X$ be smooth (under the given conventions). We will first establish some auxiliary results for the case of coherent $\D_Y$-modules. 

Let $f\colon Y \to X$ be a closed imbedding of a singular variety into a smooth one. As discussed in \S\ref{naive-singular-sec}, {\it a priori} the direct image requires a different definition in the singular case. Namely, we have (in the notation of \cite{etingof2018poisson})
\[
    i_\natural\M_Y \simeq \I_Y \M_X\backslash \M_X, \quad \M_Y \simeq i^\natural(\I_Y \M_X\backslash \M_X)
\]
where $\M_X$ is a $\D_X$-module. These functors are mutually quasi-inverse equivalences between $\Mscr_Y$ and $\Mscr_X$. In the smooth setting, this is equivalent to the usual derived pushforward and pullback (one can see this by using smoothness to convert the quotient into a tensor product).

These functors are underived. There are the following counterparts on the bounded derived categories for arbitrary $f$: 
\[
    f_+\colon D^b(\D_Y) \to D^b(\D_X), \quad f^!\colon D^b(\D_X) \to D^b(\D_Y).
\]
Let $d = \dim Y - \dim X$. Explicit forms for these functors are available as 
\[
\R f_*\left( \M^{\sbullet}_Y \otimes^{\mathbf{L}}_{\D_Y} \D_{Y \to X}\right), \quad f^*\left(\M^{\sbullet}_X \otimes^{\mathbf{L}}_{f^*\D_Y} \D_{X \leftarrow Y}[d]\right)
\]
where $f_*, f^*$ are the direct and inverse images of sheaves and 
\[
\D_{Y \to X} = \O_Y \otimes_{f^* \O_X} f^*\D_X, \quad \D_{X \leftarrow Y} = \Omega_Y \otimes_{\O_Y} \D_{Y \to X} \otimes_{f^* \O_X} f^*\Omega^{-1}_X.
\]
For a closed imbedding $i$, $i_+$ coincides with $i_\natural$ ($i^!$, $i^\natural$, respectively) in the sense that $i_\natural \M^{\sbullet}_Y$ is the cohomology of $i_+\M^{\sbullet}_Y$ ($i^\natural$, $i^! \M^{\sbullet}_X$ where $\M^{\sbullet}_X$ has cohomology supported on $i(Y)$, respectively). Said another way, for a closed imbedding, $i_\natural$ is exact, and with the appropriate support condition on the image, $i^\natural$ is exact. 

\begin{remark}
    Note that some similar assertions as ours concerning $\D$-module operations extending uniquely to morphisms between formal schemes are made without complete proofs in \cite{beilinson1991quantization}. We choose to take the more arduous route of verifying the completion preserves these operations because of the ability to reuse many arguments in the derived setting to come. 
\end{remark}

In this section we will assume $\D_X$ is finitely-generated. This will be important in the proofs of Proposition \ref{completed-complex-prop} and Theorem \ref{completed-complex-thm} where it will guarantee that completion sends $\D_X$ to a locally free $\O_X$-module such that the completed Spencer complex is a complex of locally free $\D_X$-modules. Even when this assumption is unrealistic, we can take a finite stage of the filtration on $\D_X$, which will in general be coherent for a locally free $\D_X$-module. We will comment in various places about when statements for infinitely-generated $\D_X$ can be made and when looking at the filtration preserves those statements. 

\begin{definition}\label{completed-sheaf-def}
    Write $\widehat{X}$ for the completion of $X$ along $Y$. There exists a map of ringed spaces $\kappa\colon \widehat{X} \to X$ and the pullback of any $\O_X$-module $\mathcal{F}$ by $\kappa$, denoted
    \[
        \kappa^*\mathcal{F} = \mathcal{F}\otimes_{\O_X}\O_{\widehat{X}}.
    \]
    By \cite[10.8.8]{ega-i}, if $\mathcal{F}$ is coherent as an $\O_X$-module, then $\widehat{\mathcal{F}} \simeq \mathcal{F} \otimes_{\O_X} \O_{\widehat{X}}$. Thus we call the inverse image sheaf $\kappa^* \mathcal{F}$ the completion of $\mathcal{F}$. 
\end{definition}

Similarly to this definition, viewing $\O_{\widehat{X}}$ as a chain complex concentrated in degree zero, we have 
\begin{align}\label{completed-complex-eq}
    \begin{split}
        \qquad \qquad \qquad \kappa^*\mathcal{F}^{\sbullet} &= \mathcal{F}^{\sbullet}\otimes_{\O_X}\O_{\widehat{X}} \\ &= \mathcal{F}^n\otimes_{\O_X}\O_{\widehat{X}} \to \mathcal{F}^{n-1}\otimes_{\O_X}\O_{\widehat{X}} \to \ldots \to \mathcal{F}^0\otimes_{\O_X}\O_{\widehat{X}}
    \end{split}
\end{align}
with differential $\dd \otimes_{\O_X} \id_{\O_{\widehat{X}}}$. If $\mathcal{F}^{\sbullet}$ is coherent in each degree, then we have an isomorphism of chain complexes ({\it loc cit}).

\begin{remark}
The reader should mind that Definition \ref{completed-sheaf-def} involves an abuse of notation, as the expression for $\kappa^*\mathcal{F}$ should read 
\[
    \kappa^{-1}\mathcal{F}\otimes_{\kappa^{-1}\O_X}\O_{\widehat{X}}
\]
or equivalently---since the underlying topological space of $\widehat{X}$ is $Y$---
\[
    \mathcal{F}|_Y\otimes_{\O_X|_Y}\O_{\widehat{X}}.
\]
However, we can extend our restricted sheaves by zero, which does not change the formula. This applies identically to \eqref{completed-complex-eq}.
\end{remark}

On $X$ we may construct $\D_X$-modules which are coherent as $\O_X$-modules, being equivalent to vector bundles with integrable connections. This will be the setting of what follows. With reference to \eqref{completed-complex-eq}, we will consider
\[
    \kappa^*\Sp^{\sbullet}(\M_X) = \Sp^{\sbullet}(\M_X)\otimes_{\O_X}\O_{\widehat{X}}
\]
and the corresponding natural map of complexes $\kappa^*\Sp^{\sbullet}(\M_X) \xrightarrow{\sim} \widehat{\Sp^{\sbullet}(\M_X)}$. We will call the above product the completion of the Spencer complex of $\M_X$.

We will need the following lemma to generalise the statement that completion commutes with tensor operations to include the universal enveloping algebra. 

\begin{lemma}\label{completion-commutes-with-tensors-lemma}
    Let $F^pT(\F)$ be the $p$-th stage of the filtration of the tensor algebra of some coherent sheaf $\F$. The completion commutes with taking quotients of $F^pT(\F)$ for any finite $p$ so that $\widehat{F^pT(\F)} \simeq F^pT(\widehat{\F})$.
\end{lemma}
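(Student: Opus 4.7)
The plan is to reduce the claim to two standard facts already in play: (i) by Definition \ref{completed-sheaf-def} and \cite[10.8.8]{ega-i}, the completion of a coherent sheaf $\G$ on $X$ is computed as the extension of scalars $\G \otimes_{\O_X} \O_{\widehat{X}}$; and (ii) on coherent sheaves this functor is exact, so it carries short exact sequences of coherent sheaves to short exact sequences on $\widehat{X}$. Both apply here because, for any finite $p$, the truncated tensor algebra $F^p T(\F) = \bigoplus_{i=0}^p \F^{\otimes i}$ is a \emph{finite} direct sum of finite tensor powers of a coherent sheaf on a locally Noetherian scheme, and is therefore itself coherent.

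First I would dispose of the tensor-power identification: since $- \otimes_{\O_X} \O_{\widehat{X}}$ commutes with finite direct sums, and since base change gives
\[
(\G \otimes_{\O_X} \H) \otimes_{\O_X} \O_{\widehat{X}} \simeq (\G \otimes_{\O_X} \O_{\widehat{X}}) \otimes_{\O_{\widehat{X}}} (\H \otimes_{\O_X} \O_{\widehat{X}}),
\]
an easy induction on $i \leqslant p$ yields $\widehat{\F^{\otimes i}} \simeq \widehat{\F}^{\otimes i}$ (with tensor powers on the right taken over $\O_{\widehat{X}}$). Summing in $i$ then gives $\widehat{F^p T(\F)} \simeq F^p T(\widehat{\F})$. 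For the quotient part, if $K \hookrightarrow F^p T(\F) \twoheadrightarrow Q$ is a short exact sequence, then $K$ and $Q$ are coherent by local Noetherianity, so applying $- \otimes_{\O_X} \O_{\widehat{X}}$ and invoking fact (ii) exhibits $\widehat{Q}$ as the corresponding quotient $F^p T(\widehat{\F})/\widehat{K}$.

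The only subtlety worth flagging is the change of structure sheaves: $F^p T(\F)$ is assembled from $\O_X$-linear tensor powers, whereas $F^p T(\widehat{\F})$ is assembled from $\O_{\widehat{X}}$-linear tensor powers, and the bridge between them is precisely the base-change isomorphism above. Because the filtration is truncated at a finite stage $p$, this bridging is applied only finitely many times and there is no issue of topological completion or convergence that would otherwise arise on passage to the full tensor algebra $T(\F)$. This is exactly why the lemma is restricted to finite $p$, and it is what will eventually force the more delicate derived arguments of the later sections when we wish to work with all of $\D_X$ at once.
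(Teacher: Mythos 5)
Your proof is correct and follows essentially the same route as the paper's: reduce to a finite direct sum of tensor powers, commute completion with the tensor product (you derive this from the base-change description $\widehat{\G}\simeq\G\otimes_{\O_X}\O_{\widehat{X}}$ where the paper simply cites \cite[7.7.1]{ega-i}), and use exactness of completion on coherent sheaves to handle the quotient. Your explicit remark about the change of structure sheaves from $\O_X$ to $\O_{\widehat{X}}$ is a welcome clarification but does not change the substance of the argument.
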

\begin{proof}
    The $p$-th stage of the filtration of the tensor algebra of a sheaf $\F$ is a length $p$ direct sum of multi-fold tensor products. Since completion commutes with finite direct sums it suffices to prove the claim for 
    \[
        \faktor{\F \otimes_{\O_X} \ldots \otimes_{\O_X} \F}{\mathcal I}.
    \]
    By the exactness of completion on coherent sheaves this is isomorphic to
    \[
        \faktor{\widehat{\F \otimes_{\O_X} \ldots \otimes_{\O_X} \F}}{\widehat{\mathcal{I}}}
    \]
    and by \cite[7.7.1]{ega-i} the tensor product commutes with completions to yield the claim.
\end{proof}

\begin{lemma}
    Let $X$ be smooth of dimension $m$ and $\widehat{X}$ be the completion of $X$ along some singular subscheme $Y = V(\mathcal I)$ of dimension $n$. The completion along $\mathcal I$ of the ring of differential operators of order at most $p$ on $X$ is isomorphic to the ring of differential operators of order at most $p$ on $\widehat{X}$.
\end{lemma}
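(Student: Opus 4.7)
The idea is to pass to local coordinates on a smooth chart and reduce to the previous lemma via the symbol filtration. On a smooth scheme $X$, the sheaf $F^p\D_X$ of differential operators of order at most $p$ is coherent as an $\O_X$-module (in fact locally free), with associated graded piece in degree $p$ isomorphic to $\mathrm{Sym}^p \T_X$. Definition \ref{completed-sheaf-def} then makes the completion $\widehat{F^p\D_X} \simeq F^p\D_X \otimes_{\O_X} \O_{\widehat{X}}$ well-defined, and since each term is coherent, completion is exact on the short exact sequences
\[
0 \longrightarrow F^{p-1}\D_X \longrightarrow F^p\D_X \longrightarrow \mathrm{Sym}^p \T_X \longrightarrow 0.
\]

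I would proceed by induction on $p$. The base case $p=0$ is simply the defining property $\widehat{\O_X} \simeq \O_{\widehat{X}}$. For the induction step, Lemma \ref{completion-commutes-with-tensors-lemma} gives $\widehat{\mathrm{Sym}^p \T_X} \simeq \mathrm{Sym}^p \widehat{\T_X}$ (symmetric powers being quotients of finite stages of the tensor filtration), and one identifies $\widehat{\T_X}$ with the sheaf of continuous derivations of $\O_{\widehat{X}}$, i.e., with $\T_{\widehat{X}}$. This identification is verified in \'etale-local coordinates $x_1, \ldots, x_m$ on $X$: the basis $\partial_1, \ldots, \partial_m$ of $\T_X$ consists of derivations satisfying $\partial_i(\mathcal{I}^{k+1}) \subseteq \mathcal{I}^k$, hence is continuous in the $\mathcal{I}$-adic topology and extends uniquely to derivations of $\O_{\widehat{X}}$. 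The parallel filtration of $\D_{\widehat{X}}$ by order, together with the five lemma, then closes the induction.

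The main obstacle I expect is ensuring that the algebra structure---in particular composition of operators and the Leibniz relation---is also preserved by completion, not merely the underlying $\O_X$-module structure. The key point is that at any fixed order $p$ composition is finitary: it involves only finitely many applications of the Leibniz rule and remains within a bounded stage of the filtration. Since completion commutes with finite direct sums and with tensor operations on coherent sheaves (by Lemma \ref{completion-commutes-with-tensors-lemma}), the induced composition on $\widehat{F^p \D_X}$ coincides with that on $F^p \D_{\widehat{X}}$ under the module identification above. This finitary character is precisely why the statement is phrased for a fixed filtration stage rather than for all of $\D_X$ at once; passing to the full ring loses coherence and requires the derived treatment developed in later sections.
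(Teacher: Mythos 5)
Your proposal is correct and reaches the same conclusion, but by a genuinely different dévissage than the paper. The paper's proof is a one-step presentation argument: it identifies $\D_X$ with the universal enveloping algebra $U(\T_X)$, writes the $p$-th stage of the PBW filtration as $F^pT(\T_X)/F^p\mathcal{R}$, and invokes Lemma \ref{completion-commutes-with-tensors-lemma} to commute the completion past the quotient of the finite tensor-algebra stage, obtaining $F^p\widehat{U(\T_X)} = F^pU(\widehat{\T_X})$ directly. You instead induct on $p$ using the symbol exact sequences $0 \to F^{p-1}\D_X \to F^p\D_X \to \mathrm{Sym}^p\T_X \to 0$, exactness of completion on coherent sheaves, and the five lemma. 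Both routes rest on the same two pillars --- commutation of completion with finite-stage tensor constructions on coherent sheaves, and the identification $\widehat{\T_X}\simeq\T_{\widehat{X}}$ (which the paper only establishes afterwards, in Lemma \ref{tangent-sheaf-lem}, via $\mathcal{H}om(\Omega^1_X,\O_X)$ rather than your continuity-of-derivations argument; your ordering is arguably cleaner). Your approach buys two things the paper leaves implicit: an explicit comparison map $\widehat{F^p\D_X}\to F^p\D_{\widehat{X}}$ coming from $\I$-adic continuity of order-$\leqslant p$ operators, and an argument that the composition product, not just the $\O_X$-module structure, is preserved. The paper's approach gets the algebra structure for free from the enveloping-algebra presentation but never exhibits the comparison map; it does, however, avoid needing the symbol sequence for the formal scheme $\widehat{X}$ itself, which your five-lemma step quietly assumes and which you should at least remark follows from the same local-coordinate description. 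Your closing observation about why the statement is restricted to a fixed filtration stage matches the paper's own discussion surrounding the lemma.
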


\begin{proof}
    The sheaf of differential operators is equivalent to the universal enveloping algebra of the tangent sheaf, hence to a quotient of the tensor algebra of $\T_X$ by certain commutation relations. Take the Poincar\'e--Birkhoff--Witt filtration $F^p(\D_X)$ yielding differential operators of order at most $p$. This yields
    \[
        \faktor{F^pT(\T_X)}{F^p\mathcal{R}}
    \]
    where $T(\T_X)$ is the tensor algebra of $\T_X$ and $\mathcal{R}$ is the sheaf of ideals for the commutation relations. Now by Lemma \ref{completion-commutes-with-tensors-lemma} we have $F^p\widehat{U(\T_X)} = F^pU(\widehat{\T_X})$.
\end{proof}

As an abuse of notation we will write $\widehat{\D_X} \simeq \D_{\widehat{X}}$, though this is to be understood in the sense of a finite stage of the filtration. 

\begin{remark}
    Crucially, the stalk of a finite stage of the filtration of $\D_X$ is Noetherian. One can then verify as a sanity check that at the stalks we have an expression for the filtration of the ring of derivations on $\widehat{X}$, seen as a quotient of the free $\O_{\widehat{X}}$-algebra generated by derivations in coordinate directions of $Y$ and in infinitesimal normal directions inside $X$. By definition this is the stalk of $F^p(\D_{\widehat{X}})$. 
\end{remark}

\begin{corollary}
    For $\D_X$ finitely-generated, $\widehat{\D_X} \simeq \D_{\widehat{X}}$ on the nose. 
\end{corollary}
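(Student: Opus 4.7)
The plan is to upgrade the preceding lemma, which asserts $\widehat{F^p(\D_X)} \simeq F^p(\D_{\widehat{X}})$ for each finite $p$, into an isomorphism of the full rings. Finite generation is precisely what should let us drop the filtration indexing.

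First I would argue that finite generation of $\D_X$ as a sheaf of $\O_X$-modules forces the order filtration to stabilise: there exists some $p_0$ with $F^{p_0}(\D_X) = \D_X$. On a smooth $X$, the graded pieces of the PBW filtration identify with $\mathrm{Sym}^p(\T_X)$, and so infinitely many non-trivial graded pieces would preclude finite generation over $\O_X$. The analogous stabilisation $F^{p_0}(\D_{\widehat{X}}) = \D_{\widehat{X}}$ then follows from the description in the preceding Remark: since $\D_{\widehat{X}}$ is generated at order at most $p_0$ by derivations along $Y$ and in the infinitesimal normal directions inside $X$, completion along $\mathcal{I}$ produces no new higher-order generators beyond those already present on $X$. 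With both filtrations terminating at the common stage $p_0$, a single application of the preceding lemma gives
\[
    \widehat{\D_X} = \widehat{F^{p_0}(\D_X)} \simeq F^{p_0}(\D_{\widehat{X}}) = \D_{\widehat{X}},
\]
which is the desired isomorphism on the nose. Coherence of $\D_X$ (implied by the hypothesis) makes Definition \ref{completed-sheaf-def} applicable, so $\widehat{\D_X}$ agrees unambiguously with the pullback $\kappa^*\D_X$ and no interpretive ambiguity remains in the left-hand side.

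The main obstacle I anticipate is verifying that the stabilisation property really does transfer across the completion, i.e.\ that the same $p_0$ controls both $\D_X$ and $\D_{\widehat{X}}$. Granted the local description of $\D_{\widehat{X}}$ in terms of its completed tangent data (as in the preceding Remark), this is essentially automatic; otherwise one has to descend to stalks and run a direct computation there, comparing the PBW filtration of $U(\T_X)_x$ with the PBW filtration of $U(\T_{\widehat{X}})_x$ after tensoring with $\O_{\widehat{X},x}$.
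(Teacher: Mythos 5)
Your proposal is correct and follows essentially the same route the paper intends: finite generation over $\O_X$ forces the order filtration to terminate at some finite stage $p_0$, after which the preceding lemma applied at that single stage gives the isomorphism without the ``abuse of notation'' caveat. The only cosmetic simplification available is that the stabilisation of $F^{\sbullet}(\D_{\widehat{X}})$ at the same $p_0$ follows directly from the lemma's isomorphisms $\widehat{F^p(\D_X)} \simeq F^p(\D_{\widehat{X}})$ being compatible with the filtration inclusions, with no need to invoke the stalk-level description from the Remark.
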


\begin{lemma}\label{tangent-sheaf-lem}
    Let $X$ be a smooth scheme locally of finite type and $\I \subset \O_X$ a coherent sheaf of ideals. One has an isomorphism $\widehat{\T_X} \simeq \T_{\widehat{X}}$.
\end{lemma}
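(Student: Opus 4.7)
The plan is to use smoothness of $X$ to reduce the comparison to a statement about locally free sheaves of finite rank, where completion is transparent, and then check the identification locally by a coordinate argument. Since $X$ is smooth and locally of finite type over $k$, the sheaf $\Omega^1_{X/k}$ is locally free of finite rank, hence so is $\T_X = \Homsheaf_{\O_X}(\Omega^1_{X/k}, \O_X)$. In particular $\T_X$ is coherent, so Definition \ref{completed-sheaf-def} furnishes
\[
    \widehat{\T_X} \simeq \T_X \otimes_{\O_X} \O_{\widehat{X}}
\]
with no derived correction terms.

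Next I would pass to a local picture. The claim is local on $X$, so one may assume $X = \Spec A$ with $A$ smooth over $k$ and $\I$ corresponding to an ideal $I \subset A$. \'Etale-locally there exist coordinates $x_1, \ldots, x_n$ such that $\T_X$ is free of rank $n$ on the coordinate derivations $\partial_{x_1}, \ldots, \partial_{x_n}$, and therefore
\[
    \widehat{\T_X} \simeq \widehat{A}\,\partial_{x_1} \oplus \cdots \oplus \widehat{A}\,\partial_{x_n}.
\]
On the other side, $\T_{\widehat{X}}$ is the sheaf of continuous $k$-derivations of $\O_{\widehat{X}}$, which over this affine equals $\Der_k^{\mathrm{cont}}(\widehat{A}, \widehat{A})$. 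The task thus reduces to identifying this with $\bigoplus_i \widehat{A}\,\partial_{x_i}$.

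For this identification, the inclusion $\supseteq$ uses that $\partial_{x_i}(I^m) \subseteq I^{m-1}$, which allows each coordinate derivation to extend uniquely to a continuous derivation of $\widehat{A}$. The reverse inclusion uses that any continuous derivation is determined by its values on the dense subring $A$, where it is a linear combination of the $\partial_{x_i}$ by freeness of $\T_{A/k}$; continuity forces the coefficients to live in $\widehat{A}$. These local identifications are visibly natural, so they assemble into the desired sheaf isomorphism. Alternatively, one could phrase the whole argument as dualising the standard comparison $\widehat{\Omega^1_{X/k}} \simeq \Omega^1_{\widehat{X}/k}$ (valid since $\Omega^1_{X/k}$ is coherent) against $\O_{\widehat{X}}$, using local freeness to pass Hom through the tensor product.

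The main obstacle is the continuity step, which extends derivations from $A$ to $\widehat{A}$ and relies on the estimate $\partial_{x_i}(I^m) \subseteq I^{m-1}$. This is also the point where smoothness is genuinely used beyond mere local freeness: the existence of étale coordinates is what makes the estimate, and hence the extension, automatic. With this in hand no further subtlety arises, and the resulting isomorphism is compatible with the containment $\T_X \hookrightarrow \D_X$ and its image under completion, consistent with the preceding filtration-level discussion of $\widehat{\D_X} \simeq \D_{\widehat{X}}$.
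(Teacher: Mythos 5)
Your proof is correct, but it takes a more hands-on route than the paper's. The paper argues entirely functorially: it writes $\T_X = \mathcal{H}om(\Omega^1_X, \O_X)$, uses that completion commutes with $\mathcal{H}om$ out of a coherent sheaf to get $\widehat{\T_X} \simeq \mathcal{H}om(\widehat{\Omega^1_X}, \widehat{\O_X})$, and then invokes $\widehat{\Omega^1_X} \simeq \Omega^1_{\widehat{X}}$ --- which is exactly the ``alternative'' you sketch in your closing sentences. You instead reduce to local coordinates on a smooth affine and identify the continuous derivations of $\widehat{A}$ with $\bigoplus_i \widehat{A}\,\partial_{x_i}$ by hand. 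What your version buys is an explicit picture of where the isomorphism comes from and why it interacts well with the $\D$-module structure: the observation that each $\partial_{x_i}$ is $I$-adically continuous and so extends to $\widehat{A}$ is the same continuity argument the paper reuses later for the augmentation $\D_X \to \O_X$ in Theorem \ref{completed-complex-thm}. What the paper's version buys is brevity and no need to glue local identifications. One small correction to your commentary: the estimate $\partial_{x_i}(I^m) \subseteq I^{m-1}$ is an immediate consequence of the Leibniz rule for \emph{any} derivation and any ideal, so it is not the point at which smoothness is genuinely used; smoothness enters only to make $\Omega^1_{X/k}$ (hence $\T_X$) locally free of finite rank and to supply the coordinate basis on which your computation rests.
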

\begin{proof}
    Observe that we have the isomorphism $\widehat{\T_X} = \widehat{\mathcal{H}om(\Omega^1_X, \mathcal{O}_X)} \simeq \mathcal{H}om(\widehat{\Omega^1_X}, \widehat{\O_X})$. Since $\Omega^1_X$ is coherent and completion commutes with taking differentials for finitely-presented modules, $\widehat{\Omega^1_X} \simeq \Omega^1_{\widehat{X}}$. Therefore $\widehat{\T_X} \simeq \mathcal{H}om(\Omega^1_{\widehat{X}}, \O_{\widehat{X}}) = \T_{\widehat{X}}$.
\end{proof}

\begin{lemma}\label{completed-d-mod-lem}
    If $\M$ is an $\O_X$-coherent $\D_X$-module then $\widehat{\M} \simeq \kappa^*\M$ as a $\D_{\widehat{X}}$-module.
\end{lemma}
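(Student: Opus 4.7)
The plan is first to obtain the $\O_{\widehat X}$-module identification, then to transport the differential-operator action across it. Since $\M$ is $\O_X$-coherent, Definition \ref{completed-sheaf-def} (via \cite[10.8.8]{ega-i}) gives a canonical isomorphism $\widehat{\M} \simeq \M \otimes_{\O_X} \O_{\widehat X} = \kappa^*\M$ of $\O_{\widehat X}$-modules. What remains is to show that this isomorphism intertwines the natural $\D_{\widehat X}$-actions on either side.

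On $\widehat{\M}$ the $\D_{\widehat X}$-action is obtained by completing the structure map $a\colon \D_X \otimes_{\O_X} \M \to \M$. I would work stage by stage with the PBW filtration: each $F^p(\D_X)$ is $\O_X$-coherent under the standing finite-generation hypothesis on $\D_X$, so applying $\kappa^*$ together with \cite[7.7.1]{ega-i} and Lemma \ref{completion-commutes-with-tensors-lemma} identifies $\widehat{F^p(\D_X) \otimes_{\O_X} \M}$ with $F^p(\D_{\widehat X}) \otimes_{\O_{\widehat X}} \kappa^*\M$. Passing to the colimit over $p$ produces a map $\D_{\widehat X} \otimes_{\O_{\widehat X}} \kappa^*\M \to \kappa^*\M$. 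By the naturality of completion the resulting arrow is the same as the one obtained by completing $a$ directly on $\widehat{\M}$, so the isomorphism of Definition \ref{completed-sheaf-def} is $\D_{\widehat X}$-equivariant. Associativity and the unit axiom for the completed action then follow because they hold at each finite stage of the filtration and are preserved by the exact functor $\kappa^*$ on coherent sheaves.

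The main obstacle is bookkeeping around the filtration: $\D_X$ is not itself $\O_X$-coherent, so one cannot directly apply \cite[7.7.1]{ega-i} to the full tensor product $\D_X \otimes_{\O_X} \M$. The finite-generation hypothesis on $\D_X$ is precisely what guarantees coherence of the pieces $F^p(\D_X)$, allowing the completion-commutes-with-tensor arguments to be applied stage by stage. The colimit then assembles these compatible actions into the full $\D_{\widehat X}$-module structure without disturbing the compatibility with the underlying $\O_{\widehat X}$-module isomorphism.
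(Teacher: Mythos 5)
Your proposal is correct, but it takes a genuinely different route from the paper. The paper, after invoking \cite[10.8.8]{ega-i} for the $\O_{\widehat X}$-module identification exactly as you do, simply \emph{defines} the $\D_{\widehat X}$-action on $\kappa^*\M$ by the standard pullback-of-a-$\D$-module formula: for $\xi \in \T_{\widehat X}$ and $\dd\kappa\colon \T_{\widehat X}\to\kappa^*\T_X$ one sets $\xi(s\otimes m)=\xi(s)\otimes m + s\cdot \dd\kappa(\xi)(m)$, i.e.\ the Leibniz rule transported along $\kappa$. You instead construct the action by completing the structure map $\D_X\otimes_{\O_X}\M\to\M$ filtration stage by stage and passing to the colimit. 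The two approaches are complementary: the paper's is shorter and places $\kappa^*\M$ directly inside the usual inverse-image formalism for $\D$-modules, but it leaves open whether this pullback action agrees with the action that $\widehat{\M}$ inherits from completion --- a gap the paper itself flags in the remark immediately following the lemma, where it notes that the two actions need not coincide in general and that finite generation of $\D_X$ removes the obstruction. Your argument addresses precisely that point, since you produce the inherited action on $\kappa^*\M$ and check equivariance by naturality; the price is that you never compare your completed action with the Leibniz-rule pullback action that the paper treats as canonical on $\kappa^*\M$, so strictly speaking each proof establishes equivariance for a different (though, under the finite-generation hypothesis, ultimately coincident) choice of $\D_{\widehat X}$-structure on the target. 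A fully self-contained treatment would combine the two: define the pullback action as the paper does, then run your filtration argument to show it agrees with the completed action. One minor remark: the coherence of each $F^p(\D_X)$ on a smooth finite-type scheme holds unconditionally and does not require the standing finite-generation hypothesis on $\D_X$; that hypothesis is only needed to control the full, unfiltered sheaf.
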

\begin{proof}
    Take the completion
    \[
        \widehat{\M} = \varprojlim \faktor{\M}{\mathcal{I}^r\M}.
    \]
    By \cite[10.8.8]{ega-i}, if $\M$ is coherent as an $\O_X$-module, then $\widehat{\M} \simeq \kappa^{-1}\M \otimes_{\kappa^{-1}\O_X} \O_{\widehat{X}}$. As usual we can provide the $\O_X$-module inverse image the structure of a $\D_{\widehat{X}}$-module. Explicitly, choosing an $m \in \kappa^{-1}\M$ and denoting $\dd{\kappa} \colon \T_{\widehat{X}} \to \kappa^*\T_X$, from the Leibnitz rule we have the equation 
    \[
        \xi(s\otimes m) = \xi(s) \otimes m + s \cdot \dd{\kappa(\xi)(m)}
    \]
    for the action of derivations on $\widehat{\M}$.
\end{proof}

\begin{remark}
    In general Lemma \ref{completed-d-mod-lem} fails for $\M = \D_X$ because $\D_X$ is often not coherent as an $\O_X$-module, for instance, when we consider differential operators of arbitrary order. Another {\it caveat} is that this is not the action shown above is not in general the same action as the $\widehat{\D_X}$-action $\widehat{\M}$ inherits under completion. The assumption of finite-generation removes both obstructions.
\end{remark}

\begin{prop}\label{completed-complex-prop} 
\phantom{}
    \vskip0.5em
    \begin{thmnum}
    \setlength\itemsep{0.5em}
    \item The completion of the Spencer complex of $\D_X$, $\widehat{\Sp^{\sbullet}(\D_X)}$, is given by the complex whose $i$-th degree is 
    \[
        \widehat{\D_X} \,\,\widehat{\otimes}_{\O_X} \bigwedge^i \T_{\widehat{X}}.
    \] \label{completed-complex-prop-1}
    \item  Let $\D_X$ be finitely-generated. This complex is a complex of locally free $\widehat{\D_X}$-modules; identically, locally free $\D_{\widehat{X}}$-modules. \label{completed-complex-prop-2}
    \item The completion of the Spencer complex of $\M$ is the Spencer complex of the completion of $\M$. \label{completed-complex-prop-3}
    \item The completion of the Spencer complex of $\M$ is the completed tensor product of $\M$ and $\Sp^{\sbullet}(\D_X)$. \label{completed-complex-prop-4}
    \end{thmnum}
    \vskip0.5em
\end{prop}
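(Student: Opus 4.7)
The plan is to handle the four parts in sequence, each bootstrapping from the preceding lemmas.

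For \eqref{completed-complex-prop-1}, I would apply the completion functor degreewise to $\Sp^{\sbullet}(\D_X)$. The obstacle is that $\D_X$ itself is generally not $\O_X$-coherent, so Lemma \ref{completion-commutes-with-tensors-lemma} does not apply directly at the top level. To get around this I pass to the PBW filtration $F^p\D_X$. Each stage is coherent as an $\O_X$-module, being a quotient of $F^pT(\T_X)$, so Lemma \ref{completion-commutes-with-tensors-lemma} together with the $\T_X$-identification from Lemma \ref{tangent-sheaf-lem} yields $\widehat{F^p\D_X \otimes_{\O_X} \bigwedge^i \T_X} \simeq F^p\D_{\widehat{X}} \otimes_{\O_{\widehat{X}}} \bigwedge^i \T_{\widehat{X}}$. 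Taking the inverse limit in $p$ gives $\widehat{\D_X}\,\widehat{\otimes}_{\O_X} \bigwedge^i \T_{\widehat{X}}$ by the definition of the completed tensor product, and identifies $\widehat{\D_X}$ with $\D_{\widehat{X}}$ in the sense of the corollary following Lemma \ref{tangent-sheaf-lem}.

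For \eqref{completed-complex-prop-2}, with $\D_X$ finitely-generated the identification $\widehat{\D_X} \simeq \D_{\widehat{X}}$ holds on the nose. Smoothness of $X$ makes $\T_X$ locally free, and since completion is exact on coherent sheaves, $\T_{\widehat{X}}$ is locally free as an $\O_{\widehat{X}}$-module by Lemma \ref{tangent-sheaf-lem}. Hence each $\bigwedge^i \T_{\widehat{X}}$ is locally free, and tensoring with $\D_{\widehat{X}}$ over $\O_{\widehat{X}}$ yields a locally free $\D_{\widehat{X}}$-module, which is what the statement claims.

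For \eqref{completed-complex-prop-3}, Lemma \ref{completed-d-mod-lem} supplies the isomorphism $\widehat{\M} \simeq \kappa^*\M$ of $\D_{\widehat{X}}$-modules. The $i$-th term of the Spencer complex of $\widehat{\M}$ is $\widehat{\M} \otimes_{\O_{\widehat{X}}} \bigwedge^i \T_{\widehat{X}}$, which by the associativity of tensor product and Lemma \ref{tangent-sheaf-lem} equals $\M \otimes_{\O_X} \bigwedge^i \T_X \otimes_{\O_X} \O_{\widehat{X}}$, and by \cite[10.8.8]{ega-i} this is the completion of the $i$-th term of $\Sp^{\sbullet}(\M)$ when $\M$ is $\O_X$-coherent. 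Matching the differentials comes down to checking that the explicit Leibnitz-type action formula from Lemma \ref{completed-d-mod-lem} coincides with the completion of the Spencer differential, which is a direct calculation.

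Finally, \eqref{completed-complex-prop-4} follows by combining \eqref{completed-complex-prop-1} with the obvious identification $\M \otimes_{\D_X} \Sp^{\sbullet}(\D_X) = \Sp^{\sbullet}(\M)$ and the fact that completing this tensor product produces the completed tensor product by the limit argument of part \eqref{completed-complex-prop-1}. The main obstacle throughout is bookkeeping: distinguishing classical from completed tensor products, keeping track of which objects are coherent enough to invoke the EGA commutation results, and consistently passing between $\widehat{\D_X}$-actions and $\D_{\widehat{X}}$-actions via the filtration.
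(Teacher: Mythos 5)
Your proposal is correct and follows essentially the same route as the paper: degreewise completion via the EGA commutation results and Lemma \ref{tangent-sheaf-lem} for (1) and (3), local freeness from exactness of completion on coherent pieces for (2), and absorption of the $\widehat{\D_X}$ factor into the module action for (4). Your explicit passage through the PBW filtration to handle the non-coherence of $\D_X$ is the same device the paper uses, though it places that argument in the surrounding lemmas and remarks rather than inside the proof itself.
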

\begin{proof}
    \phantom{}
    \vskip0.5em
    \begin{enumerate}
    \setlength\itemsep{0.5em}
    \item We will complete each degree separately. The claim follows by \cite[7.7.1]{ega-i}, the fact that completion commutes with tensor operations (such as exterior powers), and Lemma \ref{tangent-sheaf-lem}.
    \item  To see this, note that the Spencer complex consists of finitely-generated locally free $\D_X$-modules. Since completion commutes with finite direct sums, the completion is also locally free. There is an obvious action of $\widehat{\D_X}$ on each degree from completion. This promotes to an action of $\D_{\widehat{X}}$.
    \item This claim follows from the same argument that showed (1).
    \item Firstly we apply the same argument which we used to show (1). Now since $\widehat{\M}$ is naturally a $\widehat{\D_X}$-module, the tensor product
    \[
        \widehat{\M} \otimes \widehat{\D_X} \otimes \bigwedge^i \T_{\widehat{X}}
    \]
    absorbs the $\widehat{\D_X}$ factor into the action of derivations on $\widehat{\M}$. 
    \end{enumerate}
    \vskip0.5em
\end{proof}

\begin{remark}
    The assumption of $\D_X$ finitely-generated was only used in (ii) to establish local freeness. We can take a filtration of $\D_X$ and not lose any homological properties by the reasoning earlier in this section, culminating in Corollary \ref{pullback-cor} and Theorem \ref{completed-complex-thm}.
\end{remark}

\begin{definition}\label{spencer-complex-def}
    Let $Y$ be imbeddable as a closed subscheme into a smooth scheme $X$ lying over $k$ by a map of schemes $f$. Then we define the algebraic Spencer cohomology of a $\D_Y$-module $\M_Y$ by taking the hypercohomology of the completion along $Y$ of the pushed forward complex \eqref{pushed-forward-complex-eq},
    \[
        H^{\ast}_{\Sp}(Y,\M_Y) = \mathbf{H}^{\ast}\left(\widehat X,\widehat{\Sp^{\sbullet}(\M_X)}\right).
    \]
\end{definition}

Now we will investigate some properties of this cohomology. We will primarily be interested in decomposing the completed Spencer complex into completed pieces and producing the fact that the completed Spencer complex computes the cohomology of a formal scheme with coefficients in an $\mathcal I$-adically completed $\widehat{\D_X}$-module. 

Now in general we can say the following.

\begin{lemma}\label{formal-point-lemma}
    Let $\widehat{g}\colon \widehat{X} \to \mathrm{Spf}\,k$ be the map to the formal point induced by the completion of $g\colon X \to \Spec k$. The pullback $\widehat{g^*\D_{\Spec k}}$ is $\O_{\widehat{X}}$. 
\end{lemma}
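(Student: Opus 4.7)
The plan is to unwind both sides by two elementary observations: the triviality of $\D_{\Spec k}$, and the definition of the structure sheaf of the formal completion. First I would note that over $\Spec k$ the tangent sheaf $\T_{\Spec k}$ vanishes identically, since there are no nonzero $k$-linear derivations of $k$ into itself, so the PBW filtration on $\D_{\Spec k}$ collapses to its zeroth stage and one has $\D_{\Spec k} = \O_{\Spec k}$. Pulling back along the structure morphism $g\colon X \to \Spec k$ therefore gives $g^*\D_{\Spec k} = g^*\O_{\Spec k} = \O_X$, which is precisely the identification already invoked in \S\ref{naive-singular-sec} to write $g_+\M \simeq \R g_*(\M \otimes^{\mathbf L}_{\D_X} \O_X)$.

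Having reduced the claim to the identity $\widehat{\O_X} \simeq \O_{\widehat{X}}$, the remaining step is purely definitional. The structure sheaf $\O_{\widehat{X}}$ of the formal scheme $\widehat{X}$ is by construction the $\mathcal{I}$-adic completion of $\O_X$; equivalently, since $\O_X$ is coherent over itself, Definition \ref{completed-sheaf-def} together with \cite[10.8.8]{ega-i} gives $\widehat{\O_X} \simeq \kappa^*\O_X = \O_X \otimes_{\O_X} \O_{\widehat{X}} = \O_{\widehat{X}}$. Chaining the two steps yields $\widehat{g^*\D_{\Spec k}} = \widehat{\O_X} = \O_{\widehat{X}}$, as desired. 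Note also that since $k$ carries the trivial adic topology, $\Spf k$ coincides with $\Spec k$ and $\widehat{g}$ is nothing other than the composite $g \circ \kappa$, so no subtlety intervenes on the target side.

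The hard part is essentially non-existent here. The content of the lemma is bookkeeping: it allows one to combine the identification with Proposition \ref{completed-complex-prop} so as to interpret the hypercohomology of the completed Spencer complex on $\widehat{X}$ as a formal pushforward $\R\widehat{g}_*$ of an appropriately completed tensor product, thereby justifying the usage of Definition \ref{spencer-complex-def} as the Spencer cohomology on the formal neighbourhood.
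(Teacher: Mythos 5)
Your proof is correct and follows essentially the same route as the paper's: both reduce to the observations that $\D_{\Spec k} = \O_{\Spec k}$ (triviality of derivations of $k$) and that the completion of $\O_X$ is $\O_{\widehat{X}}$. The only difference is cosmetic — you simplify $g^*\D_{\Spec k}$ to $\O_X$ before completing, whereas the paper first commutes completion past the tensor product and then invokes $\D_{\Spec k} = \O_{\Spec k}$; your ordering is if anything slightly cleaner.
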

\begin{proof}
    By definition we have $\widehat{g^*\D_{\Spec k}} = \widehat{\O_X \otimes_{g^{-1}\O_{\Spec k}}g^{-1}\D_{\Spec k}}$. Commutativity grants us $\widehat{\O_X \otimes_{g^{-1}\O_{\Spec k}}g^{-1}\D_{\Spec k}} = \O_{\widehat{X}}\widehat{\otimes}_{\widehat{g^{-1}\O_{\Spec k}}}\widehat{g^{-1}\D_{\Spec k}}$. Since $\Spec k$ is a one point space, $\D_{\Spec k} = \O_{\Spec k}$, from which the claim follows.
\end{proof}

\begin{corollary}\label{pullback-cor}
    The pullback to $X$ of a finite stage of the filtration on $\D_{\Spec k}$ is also $\O_{\widehat{X}}$ since all differential operators of order greater than zero must vanish.
\end{corollary}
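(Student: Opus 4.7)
The plan is to reduce this corollary directly to Lemma \ref{formal-point-lemma} by showing that every finite stage of the PBW filtration of $\D_{\Spec k}$ collapses to the structure sheaf. First I would observe that $\Spec k$ is a zero-dimensional scheme, so its tangent sheaf $\T_{\Spec k} = \Der_k(k,k)$ vanishes. Since the $p$-th filtration stage $F^p(\D_{\Spec k})$ is generated as an $\O_{\Spec k}$-module by products of at most $p$ derivations, and all such products of positive length are automatically zero, one concludes that $F^p(\D_{\Spec k}) = \O_{\Spec k}$ for every $p\geqslant 0$. This is exactly the phrase ``all differential operators of order greater than zero must vanish'' in the statement.

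With this identification, the remainder of the argument is identical to the computation in Lemma \ref{formal-point-lemma}: substituting $F^p(\D_{\Spec k})$ for $\D_{\Spec k}$ throughout, the pullback $g^* F^p(\D_{\Spec k})$ is $\O_X \otimes_{g^{-1}\O_{\Spec k}} g^{-1}\O_{\Spec k} = \O_X$, whose $\I$-adic completion along $Y$ is $\O_{\widehat{X}}$ by construction. The only compatibility one should check is that passage to the $p$-th PBW stage commutes with inverse image of sheaves; this is immediate because inverse image is a left adjoint and hence commutes with the quotient used to define $F^p$, but in the present case it is vacuous since $F^p(\D_{\Spec k})$ already coincides with the structure sheaf.

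There is no real obstacle here: the corollary is a direct consequence of Lemma \ref{formal-point-lemma} and the triviality of derivations on a point. Its content is rather the bookkeeping observation that allows one to replace $\D_X$ by a finite stage of its filtration in the subsequent arguments (as anticipated in the remark preceding Proposition \ref{completed-complex-prop}), thereby removing the finite-generation hypothesis wherever it would otherwise obstruct the use of coherence.
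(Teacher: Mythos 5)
Your proposal is correct and matches the paper's intended argument exactly: the corollary is justified in the text only by the phrase ``all differential operators of order greater than zero must vanish,'' which is precisely your observation that $\Omega_{k/k}=0$ forces $F^p(\D_{\Spec k})=\O_{\Spec k}$ for every $p$, after which the computation of Lemma \ref{formal-point-lemma} applies verbatim. Your closing remark about why the corollary matters (licensing the replacement of $\D_X$ by a finite filtration stage) also agrees with the paper's surrounding discussion.
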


From this we deduce that the completed Spencer cohomology arises naturally from pulling back a $\D_X$-module to $\widehat{X}$ and then pushing it forward to a formal point, so only requires the input data of Definition \ref{spencer-complex-def}.

\begin{prop}\label{formal-point-prop}
    Let $f\colon Y \to X$ and $\M_Y$ be as in Definition \ref{spencer-complex-def}, $f_+\M_Y$ be coherent as a sheaf of $\O_X$-modules, and define 
    \[
        \widehat{g}\colon\widehat{X} \to \Spf k.
    \]
    The cohomology $H^{\ast}_{\Sp}(Y, \M_Y)$ is equivalently 
    \[
        \widehat{g}_+(\kappa^*\M_X).
    \]
\end{prop}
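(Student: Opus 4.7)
The plan is to unfold the definition of the $\D$-module pushforward $\widehat{g}_+$ until the completed Spencer complex appears as a concrete model for it, after which the equality of cohomologies is essentially tautological.

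First I would write
\[
\widehat{g}_+(\kappa^*\M_X) = \mathbf{R}\widehat{g}_*\bigl(\kappa^*\M_X \otimes^{\mathbf{L}}_{\D_{\widehat{X}}} \widehat{g}^*\D_{\Spf k}\bigr),
\]
and apply Lemma \ref{formal-point-lemma} (augmented by Corollary \ref{pullback-cor} if one has to argue one filtration stage at a time) to identify $\widehat{g}^*\D_{\Spf k}$ with $\O_{\widehat{X}}$. To compute the derived tensor product I need a resolution of $\O_{\widehat{X}}$ by locally free $\D_{\widehat{X}}$-modules. The Spencer complex $\Sp^{\sbullet}(\D_X) \to \O_X$ from Remark \ref{resolution-remark} is such a resolution over $\D_X$, and at any fixed filtration stage its terms are $\O_X$-coherent. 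Since the completion $\kappa^*$ is exact on complexes of coherent $\O_X$-modules (Definition \ref{completed-sheaf-def}), the completed complex $\widehat{\Sp^{\sbullet}(\D_X)}$ is again exact over $\O_{\widehat{X}}$, and by part (2) of Proposition \ref{completed-complex-prop} its terms are locally free $\D_{\widehat{X}}$-modules.

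Using part (4) of Proposition \ref{completed-complex-prop} together with the hypothesis that $\M_X = f_+\M_Y$ is $\O_X$-coherent (so that Lemma \ref{completed-d-mod-lem} gives $\widehat{\M_X} \simeq \kappa^*\M_X$ as a $\D_{\widehat{X}}$-module), I can represent the derived tensor product as
\[
\kappa^*\M_X \otimes^{\mathbf{L}}_{\D_{\widehat{X}}} \O_{\widehat{X}} \simeq \kappa^*\M_X \widehat{\otimes}_{\D_{\widehat{X}}} \widehat{\Sp^{\sbullet}(\D_X)} \simeq \widehat{\Sp^{\sbullet}(\M_X)}.
\]
Applying $\mathbf{R}\widehat{g}_*$ then identifies $\widehat{g}_+(\kappa^*\M_X)$ with $\mathbf{R}\Gamma\bigl(\widehat{X},\widehat{\Sp^{\sbullet}(\M_X)}\bigr)$, whose cohomology is $H^{\ast}_{\Sp}(Y,\M_Y)$ by Definition \ref{spencer-complex-def}.

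The point I expect to be the main obstacle is verifying that $\widehat{\Sp^{\sbullet}(\D_X)}$ remains a resolution of $\O_{\widehat{X}}$: exactness of $\kappa^*$ applies only to coherent $\O_X$-complexes, and the full $\D_X$ is typically not $\O_X$-coherent. As flagged in the remark following Proposition \ref{completed-complex-prop}, one must therefore restrict to a finite stage of the Poincar\'e--Birkhoff--Witt filtration (or pass to a limit), and then check that the $\D_{\widehat{X}}$-action on the completed resolution agrees with the action obtained through completion via Lemmas \ref{tangent-sheaf-lem} and \ref{completed-d-mod-lem}, so that the derived tensor product is computed with respect to the correct algebra structure.
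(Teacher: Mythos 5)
Your argument is correct and is essentially the paper's own proof run in the opposite direction: the paper starts from $H^{\ast}_{\Sp}(Y,\M_Y)$ and identifies $\widehat{\Sp^{\sbullet}(\M_X)}\simeq\kappa^*\Sp^{\sbullet}(\M_X)\simeq\Sp^{\sbullet}(\widehat{\M_X})$ before recognising the pushforward, whereas you unfold $\widehat{g}_+(\kappa^*\M_X)$ first, but both rest on the same ingredients (EGA 10.8.8, Lemma \ref{formal-point-lemma}, Proposition \ref{completed-complex-prop}, Lemma \ref{completed-d-mod-lem}). Your explicit attention to the resolution property of $\widehat{\Sp^{\sbullet}(\D_X)}$ and the finite-filtration caveat makes visible a step the paper compresses into ``it follows that,'' but it is not a different route.
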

\begin{proof}
    By \cite[10.8.8]{ega-i} we have level-wise isomorphisms of modules in each degree of the Spencer complex, and hence an isomorphism of chain complexes
    \[
        \kappa^*\Sp^{\sbullet}(\M_X) \xrightarrow{\sim} \widehat{\Sp^{\sbullet}(\M_X)}.
    \]
    As a consequence of Proposition \ref{completed-complex-prop}.(ii) we also have 
    \[
        \widehat{\Sp^{\sbullet}(\M_X)} \simeq \Sp^{\sbullet}(\widehat{\M_X}).
    \]
    Let $\widehat{g}\colon\widehat{X} \to \Spf k$. It follows that $H^\ast_{\Sp}(Y, \M_Y) = \widehat{g}_+\widehat{\M_X}$. From Lemma \ref{completed-d-mod-lem} we have $\widehat{\M_X} = i^*\M_X$. This proves the claim.
\end{proof}

\begin{theorem}\label{completed-complex-thm} 
\phantom{}
    \vskip0.5em
    \begin{thmnum}
    \setlength\itemsep{0.5em}
    \item The completed Spencer complex of $\widehat{\D_X}$ is a resolution of $\O_{\widehat{X}}$ by locally free $\widehat{\D_X}$-modules. \label{completed-complex-thm-1}
    \item  The completed Spencer complex of $\M$ computes the cohomology of $X$ with coefficients in $\widehat{\M}$. \label{completed-complex-thm-2}
    \end{thmnum}
    \vskip0.5em
\end{theorem}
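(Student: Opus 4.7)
The plan is to deduce both parts from the behaviour of completion on coherent sheaves, together with the identifications already established in Proposition \ref{completed-complex-prop}. Part (i) is the main content; part (ii) then follows from (i) by standard $\D$-module pushforward manipulations.

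For (i), I would start from Remark \ref{resolution-remark}, which says that $\Sp^{\sbullet}(\D_X)\to\O_X\to 0$ is an acyclic complex of locally free $\D_X$-modules. Passing to a finite PBW-filtration stage $F^p(\D_X)$, as permitted throughout this subsection, every term of the complex becomes coherent over $\O_X$. Since completion along a coherent sheaf of ideals on a Noetherian scheme is exact on coherent sheaves, applying it term-by-term preserves acyclicity, giving
\[
    \widehat{\Sp^{\sbullet}(\D_X)}\;\longrightarrow\;\O_{\widehat{X}}\;\longrightarrow\; 0.
\]
Local freeness of the completed terms as $\widehat{\D_X}$-modules was already verified in Proposition \ref{completed-complex-prop}(ii), so the above is the desired locally free resolution.

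For (ii), I would invoke Proposition \ref{completed-complex-prop}(iv) to identify
\[
    \widehat{\Sp^{\sbullet}(\M_X)}\;\simeq\;\widehat{\M_X}\widehatotimes_{\widehat{\D_X}}\widehat{\Sp^{\sbullet}(\D_X)},
\]
which by (i) exhibits the completed Spencer complex as a representative for the derived tensor product $\widehat{\M_X}\otimes^{\mathbf L}_{\widehat{\D_X}}\O_{\widehat{X}}$. Applying $\R\widehat g_*$ and using Corollary \ref{pullback-cor} to identify $\widehat g^*\D_{\Spf k}$ with $\O_{\widehat X}$ then shows that
\[
    \mathbf H^{\ast}\!\left(\widehat X,\widehat{\Sp^{\sbullet}(\M_X)}\right)\;\simeq\;\widehat g_+\widehat{\M_X},
\]
i.e.\ the Spencer/de~Rham cohomology of $X$ with coefficients in $\widehat{\M}$; via Lemma \ref{completed-d-mod-lem} one has $\widehat{\M_X}\simeq\kappa^*\M_X$, so this also subsumes Proposition \ref{formal-point-prop}.

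The principal obstacle is maintaining exactness of completion when $\D_X$ itself is not $\O_X$-coherent. This is resolved by systematically working inside a finite stage of the PBW filtration, where coherence (and hence applicability of \cite[7.7.1, 10.8.8]{ega-i}) is automatic, and by checking that the $\widehat{\D_X}$-module structure induced by completion coincides with the intrinsic $\D_{\widehat{X}}$-action—both ingredients established in the lemmata preceding the theorem.
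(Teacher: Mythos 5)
Your argument is correct, and part (ii) follows essentially the paper's own route: both proofs hinge on using part (i) to resolve the (completed) derived tensor product with $\O_{\widehat{X}}$ and on identifying the pullback of $\D_{\Spec k}$ (resp.\ $\D_{\Spf k}$) with $\O_{\widehat{X}}$ via Lemma \ref{formal-point-lemma}/Corollary \ref{pullback-cor}; you merely run the chain of identifications in the opposite direction, starting from the completed complex rather than from the completed pushforward. Where you genuinely diverge is in part (i). The paper re-derives exactness of the completed complex from scratch: it shows the augmentation $\D_X\to\O_X$, $\xi\mapsto\xi(1)$, is $I$-adically continuous, globalises it to a surjection $\widehat{\D_X}\to\O_{\widehat{X}}$ with kernel $\T_{\widehat{X}}$, and then verifies exactness stalk-wise by applying the Koszul differential to a basis of $\T_{\widehat{X}}$ in \'etale-local coordinates. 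You instead observe that, under the standing finite-generation hypothesis on $\D_X$ (or after passing to a finite PBW stage), every term of the augmented complex $\Sp^{\sbullet}(\D_X)\to\O_X\to 0$ is coherent, so term-by-term exactness of $I$-adic completion on coherent sheaves transports acyclicity directly, with local freeness supplied by Proposition \ref{completed-complex-prop}(ii). Your route is shorter and avoids repeating the Koszul computation, at the cost of leaning entirely on the coherence hypothesis and on the exactness of completion (both of which the paper assumes and uses elsewhere, so this is legitimate); the paper's route is longer but yields as a by-product the explicit description of the completed augmentation and its kernel, which is reused in the interpretation of the theory. One small point worth making explicit in your version: you should note that the completion of the augmentation map is again the evaluation-at-$1$ map $\widehat{\xi}\mapsto\widehat{\xi}(\widehat{1})$, so that the resolution you obtain is of $\O_{\widehat{X}}$ with its canonical $\widehat{\D_X}$-module structure and not merely of some abstract cokernel; this is immediate from continuity but is the content the paper's more explicit argument supplies.
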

\begin{proof}
    \phantom{}
    \vskip0.5em
    \begin{enumerate}
    \setlength\itemsep{0.5em}
    \item We will show that $\widehat{\Sp^{\sbullet}(\D_X)}$ resolves $\O_{\widehat{X}}$. We have established that this is isomorphic to $\Sp^{\sbullet}(\widehat{\D_X})$. Since the augmentation map 
    \[
        \D_X \to \O_X, \quad \xi \mapsto \xi(1)
    \]
    is locally a surjective $\O_X(U)$-module homomorphism, it is continuous in the $I$-adic topology (to see this recall that a homomorphism of modules $\varphi\colon M \to N$ has the property that $\varphi(I^nM)$ is contained in $I^n N$ for all $n$); following that observation, we can globalise the induced mapping to $\widehat{\D_X} \to \O_{\widehat{X}}$, sending a completed derivation $\widehat{\xi} \in \widehat{\D_X}$ to its evaluation at the image of the constant in $\O_{\widehat{X}}$, $\widehat{\xi}(\widehat{1}) \in \O_{\widehat{X}}$. This is also a surjection with kernel generated by completed differential operators with no zeroth-order part, {\it i.e.} $\T_{\widehat{X}}$. Let $(x_1, \ldots, x_n, y_{n+1}, \ldots, y_{n+m})$ be \'etale-local coordinates. To verify exactness, at a stalk we can apply the Koszul part of the differential to a basis for $\T_{\widehat{X}}$ given by $\partial_1, \ldots, \partial_{n+m}$. The claim follows.
    \item  We will complete the derived pushforward 
    \[
        g_+\M = {\bf R}g_*(\M \otimes^{\bf L}_{\D_X} g^*\D_{\Spec k}).
    \]
    It is useful to note that the derived completion commutes with derived pushforwards and derived tensor products, and that for coherent sheaves, completion is exact. For this reason we have
    \[
        \widehat{{\bf R}g_*(\M \otimes^{\bf L}_{\D_X} g^*\D_{\Spec k})} \simeq {\bf R}g_*(\widehat{\M} \widehat{\otimes}^{\bf L}_{\D_X} \widehat{g^*\D_{\Spec k}})
    \]
    In turn, by Lemma \ref{formal-point-lemma} this is equivalent to
    \[
        {\bf R}g_*(\widehat{\M}\,\, \widehat{\otimes}^{\bf L}_{\D_X} \O_{\widehat{X}}).
    \]
    Now we apply the first part of the claim to obtain
    \[
    {\bf R}g_*\left(\widehat{\M}\,\, \widehat{\otimes}_{\D_X} \Sp^{\sbullet}(\widehat{\D_X})\right).
    \]
    \end{enumerate}
    \vskip0.5em
\end{proof}

More will be said about the derived completion in the following section. 

It is possible to show that this definition depends only on the formal neighbourhoods of $Y$ and not on the imbedding, by an application of Kashiwara's equivalence: 

\begin{theorem}\label{indep-thm}
    The algebraic Spencer cohomology of the pair $(Y, \M_Y)$ is determined by the formal neighbourhoods of $Y$, and in particular, is independent of the choice of ambient space $X$ into which $Y$ is imbedded.
\end{theorem}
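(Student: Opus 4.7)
The plan is to reduce independence from the ambient smooth scheme to a direct comparison between two imbeddings $i\colon Y \hookrightarrow X$ and $i'\colon Y \hookrightarrow X'$ by passing through a common smooth ambient, then verify the comparison using a formal Poincar\'e lemma together with the compatibilities of completion developed in \S\ref{spencer-complex-section}.

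First I would form the diagonal imbedding $j = (i,i')\colon Y \hookrightarrow X\times X'$, which is again a closed imbedding into a smooth scheme, and consider the projection $p\colon X \times X' \to X$, which is smooth. By symmetry, it suffices to show that the Spencer cohomology of $(Y,\M_Y)$ computed via $X$ agrees with that computed via $X\times X'$; applying the result with the roles of $X$ and $X'$ swapped then yields the theorem. After completion along $Y$, the projection induces a formally smooth morphism $\hat p\colon \widehat{X\times X'} \to \widehat X$ whose geometric fibres over closed points of $Y$ are formal affine spaces of dimension $\dim X'$. Under Kashiwara's equivalence, compatible with smooth pullback, the two ambient $\D$-module realisations of $\M_Y$ are related by an isomorphism of the shape $j_\natural\M_Y \simeq p^!(i_\natural\M_Y)[\dim X']$, so $\kappa_{X\times X'}^* j_\natural \M_Y$ is, up to a shift, the pullback along $\hat p$ of $\kappa_X^* i_\natural\M_Y$.

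Using Proposition \ref{formal-point-prop} I would then express the two cohomologies as $\widehat g_+\kappa_X^* i_\natural\M_Y$ and $\widehat{g'}_+\kappa_{X\times X'}^* j_\natural\M_Y$, where $\widehat g$ and $\widehat{g'}$ are the structure maps to $\Spf k$. Since $\widehat{g'} = \widehat g \circ \hat p$, functoriality of the derived pushforward reduces the comparison to the identification $\hat p_+\kappa_{X\times X'}^* j_\natural\M_Y \simeq \kappa_X^* i_\natural\M_Y$, which is a formal Poincar\'e lemma along the affine fibres of $\hat p$, obtained by applying Theorem \ref{completed-complex-thm}.(i) fibrewise. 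The main obstacle will be the careful bookkeeping for this fibrewise statement: one needs to verify that the relative Spencer complex along $\hat p$ resolves $\O_{\widehat X}$ on the source, and that completion commutes with the relative pushforward in this setting. Both should follow from the compatibilities of completion with tensor products and universal enveloping algebras established in \S\ref{spencer-complex-section}, but the relative analogue of Theorem \ref{completed-complex-thm}.(i) is the genuinely new input that must be extracted from those tools.
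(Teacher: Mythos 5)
Your route is genuinely different from the paper's. The paper argues categorically: it packages completion and the Spencer construction into functors $\widehat{\Sp^{\sbullet}_{V}}\colon \Coh(\D_{V}|_Y)\to D^b_{\coh}(\D_{V}|_Y,\I)$ for each ambient $V$, invokes Kashiwara's equivalence to identify the source categories for two ambients $V_1,V_2$ (both equivalent to $\D_Y$-modules), and concludes from a commuting square that the completed Spencer complexes agree object-by-object in the derived category. Your argument is instead the classical Hartshorne one: reduce to comparing $X$ with $X\times X'$ via the diagonal imbedding and prove a relative formal Poincar\'e lemma along the fibres of $\hat p$. Each approach buys something. The paper's is shorter and avoids all fibrewise analysis, but the commutativity of its square --- that Kashiwara's equivalence intertwines the two completed Spencer functors --- is essentially the content of the theorem and is asserted rather than verified; your plan makes exactly that verification the explicit target (the relative analogue of Theorem \ref{completed-complex-thm}.(i)), at the cost of the bookkeeping you describe, and has the advantage of visibly specialising to Hartshorne's own independence proof when $\M_Y=\omega_Y$. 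One genuine misstep in your sketch: the identification $j_\natural\M_Y\simeq p^!(i_\natural\M_Y)[\dim X']$ is false, since $p^!(i_\natural\M_Y)$ is supported on all of $p^{-1}(i(Y))=i(Y)\times X'$ rather than on $j(Y)$; already for $Y=X=\mathrm{pt}$ and $X'=\A^1$ one obtains $\O_{\A^1}$ (up to shift) versus the delta module at the origin, and completion along the origin does not repair this. The correct input is the pushforward functoriality $p_+\circ j_+\simeq i_+$ together with its compatibility with completion along $j(Y)$ --- which is in fact the identity your final reduction uses, so the error is not load-bearing, but that sentence should be deleted or replaced before the argument is written out.
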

\begin{proof}
    Let $\mathbf{Coh}(\O_X)$ be the category of coherent sheaves of $\O_X$-modules and $\mathbf{Coh}(\O_X, \I)$ be the category of inverse systems of coherent sheaves of $\O_X$-modules, with $\I$ the quasi-coherent sheaf of ideals of $Y$. There is an obvious functor 
    \[
        \widehat{\phantom{...}}\colon \Coh(\O_X) \to \Coh(\O_X, \I).
    \]
    Completion is exact on coherent sheaves, so there is moreover an induced functor on bounded derived categories
    \[
        \widehat{\phantom{...}}\colon D^b_{\coh}(\O_X) \to D^b_{\coh}(\O_X, \I).
    \]
    Let $g\colon X \to \Spec k$, and denote by $\Mscr_X$ the category of $\D_X$-modules seen as bounded chain complexes concentrated in degree zero. By functoriality of $g_+$ we also have a functor
    \[
        \Sp^{\sbullet}_X \colon \Mscr_X \to D^b(\D_X).
    \]
    Restricting to the subcategory of $\D_X$-modules which are coherent as $\O_X$-modules, and its derived category $D^b_{\coh}(\D_X)$, we can compose these two functors:
    \[
        \widehat{\Sp^{\sbullet}_X}\colon \Coh(\D_X) \to D^b_{\coh}(\D_X) \to D^b_{\coh}(\O_X, \I).
    \]
    By Lemma \ref{completed-d-mod-lem}, completion preserves this subcategory, so that at last we have 
    \[
        \widehat{\Sp^{\sbullet}_X}\colon \Coh(\D_X) \to D^b_{\coh}(\D_X) \to D^b_{\coh}(\D_X, \I).
    \]
    Now take two schemes over $k$, denoted $V_1, V_2$, and consider a pair of morphisms $Y \to V_1$ and $Y \to V_2$. By Kashiwara's equivalence the subcategory of $\D_{V_1}$-modules supported on $Y$ is equivalent to the category of $\D_Y$-modules, and likewise for the subcategory of $\D_{V_2}$-modules supported on $Y$. It follows from transitivity that $\Coh(\D_{V_1}|_Y)$ and $\Coh(\D_{V_2}|_Y)$ are equivalent. Consider the diagram
    \[
    \begin{tikzcd}
        \Coh(\D_{V_1}|_Y) \rar["\sim", shift left] \ar[d, "\widehat{\Sp_{V_1}^{\sbullet}}", swap] & \lar[shift left] \Coh(\D_{V_2}|_Y) \ar[d, "\widehat{\Sp_{V_2}^{\sbullet}}"]\\
        D^b_{\coh}(\D_{V_1}|_Y, \I) \ar[r] & D^b_{\coh}(\D_{V_2}|_Y, \I) 
    \end{tikzcd}
    \]
    It is an exercise to verify that if $F\colon A \to B$ is an equivalence of abelian categories then there is an induced equivalence of derived categories $\tilde{F} \colon D^b_{\coh}(A) \to D^b_{\coh}(B)$. Essential surjectivity gives us object-wise isomorphisms in the derived category, and hence, isomorphisms on cohomology. 
\end{proof}

\begin{remark}
    In \S\ref{naive-singular-sec} it was remarked that we can imbed into a CM variety and still have desirable results. This remains true in the completion, since the completion of a CM ring is again CM.  
\end{remark}

\subsection{Revisiting the de Rham cohomology}

A consequence of Proposition \ref{formal-point-prop} is that $H^{\ast}_{\Sp}(Y, \omega_Y)$ recovers the completed de Rham cohomology of Hartshorne. As an interesting example, we will spell out the construction here. 

First we will show the pushforward takes dualising sheaves to dualising sheaves. In particular, let $f\colon Y \to X$ be a closed immersion. We will show that if $\omega_Y^{\sbullet}$ is a dualising complex on $Y$, then $f_* \omega_Y^{\sbullet}$ is a dualising complex for the coherent $\O_X$-algebra $f_*\O_Y$. By uniqueness of dualising complexes, this determines the dualising complex on all of $X$. 

Firstly, one has
\[
f_* \omega_Y^{\sbullet} \simeq \mathbf{R}\mathcal{H}om_{\O_X}(f_* \O_Y, \omega_X^{\sbullet}).
\]
To prove this we proceed as follows. We note that $f$ is a finite proper morphism because it is a closed immersion. Let $\overline{f}\colon (Y,\O_Y)\to (X,f_*\O_Y)$ to be the induced morphism of locally ringed spaces and let $f^\flat\colon D^b(X) \to D^b(Y)$ be defined as
\[
f^\flat = \overline{f}^* \mathbf{R} \mathcal{H}om_{\O_X}(f_*\O_Y,-).
\] Then according to \cite[\S III.6.5]{hartshorne1966residues} we have that the map
\[
\tau\colon \mathcal{H}om_{\O_X}(f_*\O_Y,G) \to \overline{f}_*\overline{f}^*\mathcal{H}om_{\O_X}(f_*\O_Y,G)
\]
induces an isomorphism
\[
\mathbf{R}\tau\colon \mathbf{R}\mathcal{H}om_{\O_X}(f_*\O_Y,G^{\sbullet})\to \mathbf{R}f_*f^\flat(G^{\sbullet})
\]
for $G^{\sbullet} \in D^+_{\mathrm{qc}}(X)$.
Now by \cite[\S V.2.4]{hartshorne1966residues} we know that $f^\flat\omega_X^{\sbullet} = \omega_Y^{\sbullet}$. Hence we get an isomorphism
\[
\mathbf{R}f_*\omega_Y^{\sbullet} \simeq \mathbf{R}\mathcal{H}om_{\O_X}(f_*\O_Y,\omega_X^{\sbullet})
\]

We will note some other facts.

(i) Since $f$ is proper, $f_*\O_Y$ is finite over $\O_X$, so that a complex of finite injective dimension over $\O_X$ has finite injective dimension over $f_*\O_Y$. 

(ii) If $\omega_X^{\sbullet}$ is dualising for $\O_X$ then for any coherent $F$ one has 
\[
    \mathbf{R}\mathcal{H}om_{\O_X}(\mathbf{R}\mathcal{H}om_{\O_X}(F, \omega_X^{\sbullet}), \omega_X^{\sbullet}) \simeq F.
\]

(iii) If $A$ is a coherent $\O_X$-algebra and $\omega_X^{\sbullet}$ is dualising for $\O_X$, then for any coherent $A$-module $M$, it holds that 
\[
    \mathbf{R}\mathcal{H}om_{\O_X}(M, \omega_X^{\sbullet}) \simeq \mathbf{R}\mathcal{H}om_A(M, \mathbf{R}\mathcal{H}om_{\O_X}(A, \omega_X^{\sbullet})).
\]
Now we can prove the following theorem.

\begin{theorem}
Let $f: Y \to X$ be a proper morphism of Noetherian schemes with $X$ admitting a dualising complex $\omega_X^{\sbullet}$. If the natural isomorphism $\mathbf{R}f_*\omega_Y^{\sbullet} \simeq \mathbf{R}\mathcal{H}om_{\O_X}(f_*\O_Y, \omega_X^{\sbullet})$ holds, then $f_*\omega_Y^{\sbullet}$ is a dualising complex for the coherent $\O_X$-algebra $f_*\O_Y$.
\end{theorem}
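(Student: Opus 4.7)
The plan is to verify the three defining properties of a dualising complex for the coherent $\O_X$-algebra $A \coloneq f_*\O_Y$: namely, that the complex $\omega_A^{\sbullet}$ representing $f_*\omega_Y^{\sbullet}$ has bounded coherent cohomology over $A$, has finite injective dimension over $A$, and satisfies biduality on $D^b_{\coh}(A)$. Throughout I would use the standing isomorphism $\omega_A^{\sbullet} \simeq \mathbf{R}\mathcal{H}om_{\O_X}(A, \omega_X^{\sbullet})$ supplied by the hypothesis.

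The coherence property is immediate: $A$ is coherent over $\O_X$ and $\omega_X^{\sbullet} \in D^b_{\coh}(\O_X)$, so $\mathbf{R}\mathcal{H}om_{\O_X}(A, \omega_X^{\sbullet})$ lies in $D^b_{\coh}(\O_X)$, and its natural $A$-module structure obtained by multiplication on the first argument refines it to an object of $D^b_{\coh}(A)$. For the finite injective dimension, the Hom--tensor adjunction yields
\[
    \mathbf{R}\mathcal{H}om_A\big(M, \mathbf{R}\mathcal{H}om_{\O_X}(A, \omega_X^{\sbullet})\big) \simeq \mathbf{R}\mathcal{H}om_{\O_X}(M, \omega_X^{\sbullet})
\]
for any $A$-module $M$; since the right-hand side has cohomological amplitude bounded independently of $M$ by the injective dimension of $\omega_X^{\sbullet}$ over $\O_X$, the same bound controls the injective dimension of $\omega_A^{\sbullet}$ over $A$. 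This is precisely the content of fact (i) combined with the adjunction (iii).

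The biduality requirement is the main computation. For $M \in D^b_{\coh}(A)$, I note that $M$ is also coherent over $\O_X$ since $A$ is, so fact (ii) is available. Applying fact (iii) once to rewrite the inner Hom as $\mathbf{R}\mathcal{H}om_{\O_X}(M, \omega_X^{\sbullet})$, then again to rewrite the outer Hom against the coherent $A$-module $\mathbf{R}\mathcal{H}om_{\O_X}(M, \omega_X^{\sbullet})$, and finally invoking fact (ii), one computes
\begin{align*}
    \mathbf{R}\mathcal{H}om_A\big(\mathbf{R}\mathcal{H}om_A(M, \omega_A^{\sbullet}), \omega_A^{\sbullet}\big)
    &\simeq \mathbf{R}\mathcal{H}om_A\big(\mathbf{R}\mathcal{H}om_{\O_X}(M, \omega_X^{\sbullet}), \omega_A^{\sbullet}\big) \\
    &\simeq \mathbf{R}\mathcal{H}om_{\O_X}\big(\mathbf{R}\mathcal{H}om_{\O_X}(M, \omega_X^{\sbullet}), \omega_X^{\sbullet}\big) \\
    &\simeq M.
\end{align*}

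The main obstacle is not any single step but the bookkeeping of module structures: one must verify that the isomorphisms supplied by (iii) are natural in both arguments so that the displayed composition actually computes the canonical biduality map on $A$-modules rather than some twisted version. Once this naturality is in place, all three defining properties are established, and $\omega_A^{\sbullet}$ is dualising for $A$.
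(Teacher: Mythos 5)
Your proposal is correct and takes essentially the same route as the paper: both rest on the standing identification $f_*\omega_Y^{\sbullet} \simeq \mathbf{R}\mathcal{H}om_{\O_X}(f_*\O_Y,\omega_X^{\sbullet})$ together with the auxiliary facts (i)--(iii), and your biduality computation specialises at $M = f_*\O_Y$ to exactly the paper's verification of the homothety isomorphism. The only cosmetic difference is that you verify full biduality on $D^b_{\coh}(f_*\O_Y)$ and deduce finite injective dimension from the adjunction, where the paper checks reflexivity of the structure sheaf and coherence of $\mathbf{R}\mathcal{H}om_{f_*\O_Y}(M, f_*\omega_Y^{\sbullet})$ separately --- equivalent axiomatisations of the dualising property.
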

\begin{proof}
We will verify the three conditions for $f_*\omega_Y^{\sbullet}$ to be dualising for $f_*\O_Y$ \cite[\S V.2]{hartshorne1966residues}. 

Firstly, since the derived hom preserves injective resolutions, the complex 
\[
    \mathbf{R}\mathcal{H}om_{\O_X}(f_*\O_Y, \omega_X^{\sbullet})
\]
has finite injective dimension over $\O_X$. Now (i) implies it has the same property over the finite extension $f_*\O_Y$. 

Secondly, the reflexivity isomorphism $f_*\O_Y \to \mathbf{R}\mathcal{H}om_{f_*\O_Y}(f_*\omega_Y^{\sbullet}, f_*\omega_Y^{\sbullet})$ follows from a brief computation. Using the isomorphism $\R\tau$ and tensor-hom adjunction, we have 
\begin{align*}
\mathbf{R}\mathcal{H}om_{f_*\O_Y}(f_*\omega_Y^{\sbullet}, f_*\omega_Y^{\sbullet}) &\simeq \mathbf{R}\mathcal{H}om_{f_*\O_Y}(\mathbf{R}\mathcal{H}om_{\O_X}(f_*\O_Y, \omega_X^{\sbullet}), \mathbf{R}\mathcal{H}om_{\O_X}(f_*\O_Y, \omega_X^{\sbullet})) \\ &\simeq \mathbf{R}\mathcal{H}om_{\O_X}(\mathbf{R}\mathcal{H}om_{\O_X}(f_*\O_Y, \omega_X^{\sbullet}) \otimes^{\mathbf{L}}_{f_*\O_Y} f_*\O_Y, \omega_X^{\sbullet}) \\ &\simeq \mathbf{R}\mathcal{H}om_{\O_X}(\mathbf{R}\mathcal{H}om_{\O_X}(f_*\O_Y, \omega_X^{\sbullet}), \omega_X^{\sbullet}).
\end{align*}
The last expression is isomorphic to $f_*\O_Y$ by (ii); the claim follows. 

Thirdly, for any coherent $f_*\O_Y$-module $M$, we have 
\[
\mathbf{R}\mathcal{H}om_{f_*\O_Y}(M, f_*\omega_Y^{\sbullet}) \simeq \mathbf{R}\mathcal{H}om_{f_*\O_Y}(M, \mathbf{R}\mathcal{H}om_{\O_X}(f_*\O_Y, \omega_X^{\sbullet})).
\]
From (iii) the last expression is isomorphic to $\mathbf{R}\mathcal{H}om_{\O_X}(M, \omega_X^{\sbullet})$. Now since $M$ is coherent over $f_*\O_Y$, hence over $\O_X$, and $\omega_X^{\sbullet}$ is dualising for $\O_X$, this complex has coherent cohomology over $\O_X$, hence over $f_*\O_Y$.
\end{proof}

From this theorem it follows that one can reconstruct duality on $Y$ from duality on $f_*\O_Y$-modules. Now by uniqueness of $\omega_X^{\sbullet}$ and the fact that we have an adjunction between $f_*$ and $f^!$, the extension $\omega_X^{\sbullet}$ such that $f^\flat\omega_X^{\sbullet} \simeq \omega_Y^{\sbullet}$ is uniquely determined by agreement with the complex on $Y \subset X$.

Now consider 
\[
\Sp(\omega_X) = \left[\omega_X \otimes_{\O_X}\bigwedge^n\T_X \to \cdots \to\omega_X \otimes_{\O_X}\bigwedge^2\T_X \to \omega_X  \otimes_{\O_X}\T_X \to \omega_X  \right].
\]
By smoothness, on any $X$ we have 
\[
\Sp(\omega_X) = \Big[\Omega^0_X \to \cdots \to\Omega^{n-2}_X \to \Omega_X^{n-1} \to \Omega^n_X  \Big].
\]
Now take the pullback by $i$ of the sheaf in each degree and apply Proposition \ref{completed-d-mod-lem} to define the differential. 

Since each $\Omega^k_Y$ is a $\D_Y$-module, it follows by Theorem \ref{indep-thm} that the complex is independent of $X$. That is to say, putting $\omega_Y$ in Theorem \ref{indep-thm}, one readily sees that the result \cite[Theorem 1.4]{hartshorne1975rham} follows.

\begin{corollary}
    Completed de Rham cohomology is independent of the ambient scheme and depends only on $Y$.
\end{corollary}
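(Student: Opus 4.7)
The plan is to reduce this to a direct application of Theorem \ref{indep-thm} with $\M_Y = \omega_Y$. First I would spell out that on a smooth ambient scheme $X$ of dimension $n$, the interior product $\omega_X \otimes_{\O_X}\bigwedge^i \T_X \xrightarrow{\sim} \Omega^{n-i}_X$ furnishes an isomorphism of $\O_X$-linear complexes $\Sp^{\sbullet}(\omega_X) \simeq \Omega^{n+\sbullet}_X$, as observed already in \S\ref{vinogradov-sec}; hence the completed Spencer complex $\widehat{\Sp^{\sbullet}(\omega_X)}$ coincides, up to the canonical degree shift, with the formal completion $\widehat{\Omega}^{\sbullet}_{X/Y}$ of the algebraic de Rham complex along $Y$, and its hypercohomology is precisely the completed de Rham cohomology of Hartshorne \cite{hartshorne1975rham}.

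Next I would note that $\omega_Y$ carries a canonical $\D_Y$-module structure (as the right dualising module), so it is a legitimate input to Definition \ref{spencer-complex-def}. Consequently $H^{\ast}_{\Sp}(Y, \omega_Y)$ is defined, and the identification of the preceding paragraph exhibits it as the completed de Rham cohomology of $Y$ computed via the ambient $X$.

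Finally, applying Theorem \ref{indep-thm} to the pair $(Y, \omega_Y)$ gives immediately that $H^{\ast}_{\Sp}(Y, \omega_Y)$ depends only on the formal neighbourhoods of $Y$ and not on the choice of smooth ambient scheme $X$. Combined with the identification above, this is precisely the statement that completed de Rham cohomology is an invariant of $Y$ alone.

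The only point requiring care is that the isomorphism $\Sp^{\sbullet}(\omega_X) \simeq \Omega^{n+\sbullet}_X$ be compatible with the completion functor $\kappa^{\ast}$, so that $\widehat{\Sp^{\sbullet}(\omega_X)} \simeq \widehat{\Omega}^{n+\sbullet}_{X/Y}$ as complexes and not merely degreewise. Since the pairing $\omega_X \otimes \bigwedge^i \T_X \to \Omega^{n-i}_X$ is $\O_X$-linear between coherent sheaves and completion along $Y$ is exact on $\mathbf{Coh}(\O_X)$ by \cite[10.8.8]{ega-i}, it commutes with the differentials of the Spencer/de~Rham complex, so compatibility is automatic. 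Everything else is formal from Theorem \ref{indep-thm}.
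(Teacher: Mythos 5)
Your proposal is correct and follows essentially the same route as the paper: identify $\Sp^{\sbullet}(\omega_X)$ with the (shifted) de Rham complex via the perfect pairing on the smooth ambient $X$, complete along $Y$, and invoke Theorem \ref{indep-thm} for the pair $(Y,\omega_Y)$. Your final remark on compatibility of the pairing with completion is exactly the point the paper addresses separately in its subsequent ``sanity check'' theorem, so nothing is missing.
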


This is, by \cite{beilinson1991quantization}, a crystalline theory, which we hope to investigate more in the future. The notion of a $\D$-module on the formal completion along a singular subvariety is evidently closely related to that of a $\D$-crystal, and in fact by classical results they are equivalent \cite[\S7.10]{beilinson1991quantization}. From this viewpoint one can see Hartshorne's theorems\footnote{In fact the relation between crystalline, and completed de Rham, cohomology in the singular case was first observed in unpublished lectures of Deligne (1969), with the related paper \cite{lieberman1971duality} following that.} as giving a satisfactory resolution to a conjecture of Grothendieck \cite[\S4]{grothendieck1968crystals}. This is in the sense that he shows sheaf cohomology on the crystalline site, whose open sets (in characteristic zero) are nilpotent thickenings of Zariski open subschemes, admits a quasi-isomorphism to the hypercohomology of the completed de Rham algebra, thus representing it in the derived category.

In the above we computed the pairing and completed to get the completed differentials. We ought to also be able to completed both arguments and get a pairing on completed modules which gives us the completed differentials. As a sanity check we verify this here. 

\begin{theorem}
    Let $X$ be of dimension $n$. Completing $\omega_X \otimes \wedge^i \T_X$ commutes with computing the perfect pairing on $\widehat{\omega_X}$.
\end{theorem}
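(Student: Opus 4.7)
The plan is to exploit coherence of all sheaves in sight, exactness of completion on coherent sheaves, and naturality of the interior-product pairing. First I would observe that since $X$ is smooth of finite type, the sheaves $\omega_X$, $\T_X$, and $\Omega^{n-i}_X$ are all locally free of finite rank on $X$, hence coherent $\O_X$-modules. The perfect pairing
\[
    \omega_X \otimes_{\O_X} \bigwedge^i \T_X \xrightarrow{\sim} \Omega^{n-i}_X
\]
is an $\O_X$-linear isomorphism realised by iterated interior product of polyvector fields against top forms.

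Next I would complete this isomorphism. Because completion is exact on coherent sheaves, the induced map $\widehat{\omega_X \otimes_{\O_X} \bigwedge^i \T_X} \to \widehat{\Omega^{n-i}_X}$ is again an isomorphism of coherent $\O_{\widehat{X}}$-modules. By \cite[7.7.1]{ega-i} there is a canonical identification
\[
    \widehat{\omega_X \otimes_{\O_X} \textstyle\bigwedge^i \T_X} \;\simeq\; \widehat{\omega_X} \widehatotimes_{\O_{\widehat{X}}} \bigwedge^i \widehat{\T_X},
\]
and by Lemma \ref{tangent-sheaf-lem} the right-hand exterior power is identified with $\bigwedge^i \T_{\widehat{X}}$. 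The same argument employed in Lemma \ref{tangent-sheaf-lem} applies verbatim to $\Omega^1_X$ (which is coherent on smooth $X$), so that $\widehat{\Omega^{k}_X} \simeq \Omega^{k}_{\widehat{X}}$ for all $k$, and in particular $\widehat{\Omega^{n-i}_X} \simeq \Omega^{n-i}_{\widehat{X}}$.

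Finally, I would verify that the map obtained by transporting the completed pairing through these identifications coincides with the interior-product pairing on $\widehat{X}$ formed intrinsically from $\widehat{\omega_X}$ and $\widehat{\T_X}$. This amounts to chasing the diagram
\[
\begin{tikzcd}
\widehat{\omega_X \otimes_{\O_X} \bigwedge^i \T_X} \arrow[r,"\sim"]\arrow[d,"\sim"] & \widehat{\Omega^{n-i}_X}\arrow[d,"\sim"] \\
\widehat{\omega_X}\widehatotimes_{\O_{\widehat{X}}}\bigwedge^i \T_{\widehat{X}} \arrow[r] & \Omega^{n-i}_{\widehat{X}}
\end{tikzcd}
\]
whose vertical arrows are the EGA identifications and whose top arrow is the completion of the pairing on $X$. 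The hard (or rather, least automatic) point is that interior product transports correctly across the identification $\widehat{\mathcal{F} \otimes_{\O_X} \mathcal{G}} \simeq \widehat{\mathcal{F}} \widehatotimes_{\O_{\widehat{X}}} \widehat{\mathcal{G}}$ and the identifications coming from Lemma \ref{tangent-sheaf-lem}; this is where the geometric content sits. However, interior product is a natural transformation between the appropriate bifunctors of coherent $\O_X$-modules, and all of the identifications above are induced by the morphism of ringed spaces $\kappa \colon \widehat{X} \to X$ (equivalently, by the universal property of completion of tensor products). Thus by functoriality the diagram commutes, yielding the claim.
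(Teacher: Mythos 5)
Your proposal is correct and follows essentially the same route as the paper's proof: both rest on coherence and local freeness of $\omega_X$, $\T_X$, and $\Omega^{n-i}_X$, exactness of completion on coherent sheaves, the identification $\widehat{\mathcal{F}}\simeq\kappa^*\mathcal{F}$ together with compatibility of completion with tensor operations, and Lemma \ref{tangent-sheaf-lem} to identify the completed pairing with the intrinsic interior-product pairing on $\widehat{X}$. The only cosmetic difference is that the paper additionally records (via Lipman) that $\kappa^*\omega_X$ remains dualising on $\widehat{X}$ and builds the pairing on the completed side, whereas you complete the pairing on $X$ and verify commutativity of the comparison square by naturality; these are the two directions of the same argument.
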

\begin{proof}
    By the arguments in \cite{lipman} we have that $i^*\omega_X$ is dualising on $\widehat{X}$. We will denote this sheaf by writing $\omega_{\widehat{X}} \coloneqq i^*\omega_X$. Since $X$ is smooth, $\omega_X\simeq\bigwedge^{n}\Omega_{X/k}$. Now note that pulling back commutes with tensor operations such that 
    \[
        i^*\omega_X = i^*\left(\bigwedge^{n}\Omega_{X/k}\right) = \bigwedge^{n}i^*\Omega_{X/k}.
    \]
    Since $\Omega_{X/k}$ is coherent, by \cite[10.8.8]{ega-i}, we have the isomorphism 
    \[
        \bigwedge^{n}i^*\Omega_{X/k} \simeq \bigwedge^{n}\widehat{\Omega_{X/k}}
    \]
    ultimately implying $\omega_{\widehat{X}} = \widehat{\Omega^n_{X}}$. Since pullbacks also preserve local freeness and $X$ is smooth, $\widehat{\Omega_{X/k}}$ is locally free. By this fact we have a perfect pairing 
    \[
        \omega_{\widehat{X}} \widehatotimes \bigwedge^i \T_{\widehat{X}} \xrightarrow{\sim} \widehat{\Omega^{n-i}_{X/k}}
    \]
    obtained through contraction of completed $n$-forms with completed polyvector fields. By commutation of completion with tensor operations this is the same as $\Omega^{n-1}_{\widehat{X}}$.
\end{proof}

\begin{corollary}
    From the properties of the pairing there exists an isomorphism 
    \[
        \Sp^{\sbullet}(\M_Y) = \Omega^{n+\sbullet}_{\widehat{X}} \,\,\widehat{\otimes} \,\,\widehat{f_+\M_Y}.
    \]
    It then follows that the completed Spencer complex is acyclic by the results in \cite{hartshorne1975rham}.
\end{corollary}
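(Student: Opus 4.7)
The plan is to deduce the claimed isomorphism directly from the perfect pairing established in the preceding theorem, and then obtain acyclicity by identifying the resulting chain complex with a completed de Rham complex to which the formal Poincar\'e lemma of Hartshorne applies.

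First, by Definition \ref{spencer-complex-def} together with Proposition \ref{completed-complex-prop}.(\ref{completed-complex-prop-3}), the object on the left-hand side unwinds as $\Sp^{\sbullet}(\M_Y) \simeq \widehat{\Sp^{\sbullet}(f_+\M_Y)} \simeq \Sp^{\sbullet}(\widehat{f_+\M_Y})$, whose $i$-th term is $\widehat{f_+\M_Y} \widehatotimes_{\O_{\widehat X}} \bigwedge^i\T_{\widehat X}$. The pairing from the preceding theorem rewrites $\bigwedge^i\T_{\widehat X}$ as $\Omega^{n-i}_{\widehat X}\widehatotimes \omega_{\widehat X}^{-1}$, using invertibility of $\omega_{\widehat X}$ on the formally smooth $\widehat X$. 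Tensoring with $\widehat{f_+\M_Y}$ and reindexing the exterior degree to match the shift by $n$ on the right-hand side yields the claimed degree-wise identification with $\Omega^{n+\sbullet}_{\widehat X} \widehatotimes \widehat{f_+\M_Y}$.

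Next I would verify that the Spencer differential is carried to the de Rham differential on $\widehat{f_+\M_Y}$ under this identification. In the smooth uncompleted setting this is the standard comparison recalled just before Definition \ref{spencer-complex-def}. To promote it to the completion one observes that every sheaf in sight is coherent (or may be replaced by a finite stage of the filtration on $\D_X$, which is coherent), so completion is exact and the smooth identity transfers term by term via \cite[10.8.8]{ega-i} and the commutation of completion with exterior powers from Lemma \ref{completion-commutes-with-tensors-lemma}. Concretely, the Koszul-type part of the Spencer differential becomes, under contraction with $\omega_{\widehat X}$, the exterior derivative on $\Omega^{\sbullet}_{\widehat X}$, while the $\D$-module-action piece transports to the flat connection on $\widehat{f_+\M_Y}$ acting along tangent directions. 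For acyclicity, the identification above places us in precisely the setting of Hartshorne's formal Poincar\'e lemma in \cite{hartshorne1975rham}: the algebraic de Rham complex on the formal completion of a smooth $k$-scheme along a closed subscheme is acyclic in positive degrees, and the statement extends to coefficients in a coherent sheaf with integrable connection by passing to a formal coordinate neighbourhood and reducing to the untwisted case.

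The step I expect to be the main obstacle is the verification that the Spencer differential is indeed intertwined with the exterior derivative after contraction. Although each term of the Spencer and de Rham complexes completes cleanly and the pairing produces a degree-wise isomorphism, one must still check on a finite stage of the filtration of $\D_X$ that the combinatorial Koszul piece of the Spencer differential is carried to $\dd$ by the contraction. This reduces to a sign-and-contraction identity on an \'etale-local coordinate basis, which is the formal completion of the corresponding identity in the smooth ambient $X$; routine once organised, but it is the only nontrivial bookkeeping in the argument.
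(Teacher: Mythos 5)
Your argument follows exactly the route the paper intends: the paper states this corollary without proof, as an immediate consequence of the preceding pairing theorem together with the smooth-case identification $\Sp^{\sbullet}(\M_X)\simeq\Omega^{n+\sbullet}_X\otimes_{\O_X}\M_X$ recalled in \S\ref{vinogradov-sec} and Hartshorne's formal Poincar\'e lemma, and you supply precisely those three ingredients. The only point worth flagging is that you correctly surface the $\omega_{\widehat X}^{-1}$ twist that the paper's displayed formula silently absorbs, and your caveat that Hartshorne's acyclicity transfers only for $\O$-coherent coefficients matches the standing coherence hypothesis of \S\ref{spencer-complex-section}.
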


\section{The theory and properties of derived complete \texorpdfstring{$\D$}{D}-modules}\label{theory-and-properties-sec}

We will now study the derived completion of a $\D$-module. Since $\D$-modules are naturally objects of the derived category, especially on singular varieties, and the homology of complexes of $\D$-modules offers important information, we will study the derived completion. In particular, completion is only exact on coherent sheaves---a restrictive assumption in the singular case. 

Let us introduce some of the basic ideas. Recall that the $I$-adic completion of a module is 
\[
    \widehat{M} = \varprojlim_r \faktor{M}{I^r M}
\]
and a module $M$ is $I$-adically complete when the natural map $M \to \widehat{M}$ is an isomorphism. In this sense we distinguish between {\it completed} and {\it complete} objects. If $I$ is finitely-generated, a completed module is complete. 

We can apply this to a chain complex by working in degrees. In general a chain complex is called completed if it is a complex of $I$-adically completed $\O_X$-modules---namely, for a chain complex of $\O_X$-modules $(C^{\sbullet}, \dd)$, we have the $I$-adic completion $\widehat{C^{\sbullet}}$ obtained by completing in degrees:
\[
    \widehat{C^k} = \varprojlim_{r} \faktor{C^k}{I^rC^k}.
\]
By the power rule the image of the differential of $I^r$ is contained in $I^{r-1}$, so that the differential is compatible with the inverse system and one can define
\[
    \widehat{\dd} \colon \widehat{C^k} \to \widehat{C^{k-1}}.
\]
A complex is $I$-adically complete if it is a complex of $I$-adically complete modules. There are two further distinctions at the level of chain complexes detailed in \cite{pol2022homotopy}. We will mainly concern ourselves with the notion of derived completeness. The derived completion of a sheaf of $\O_X$-modules is defined as follows. Let $K^{\sbullet}$ be an object of the (unbounded) derived category of $\O_X$-modules $D(\O_X)$ and $\I \subset \O_X$ a sheaf of ideals. Take an open neighbourhood of $U$ of $X$ and consider the chain complex of modules $M^{\sbullet} \coloneqq K^{\sbullet}|_U \in \mathrm{Obj}(D(\O_X|_U))$ and $I \coloneqq \I(U)$. We say $K^{\sbullet}$ is $\I$-adically derived complete if for all $U$, for every $f \in I$ we have 
\[
    \R\varprojlim_{r}\left(\ldots M^{\sbullet} \xrightarrow{f} M^{\sbullet}  \xrightarrow{f} M^{\sbullet}\right) = 0.
\]
Here the inverse system should be interpreted as a diagram of chain complexes with chain mappings from multiplication by some element of the ideal, and the derived limit as a homotopy limit. By construction every derived completion is derived complete.

We will note that for a Noetherian ring $A$ and a finitely-generated $A$-module, the $I$-adic completion is exact; having level-wise isomorphisms further means that the $I$-adic completion of a chain complex is equivalent to the derived $I$-adic completion of that chain complex. Similarly, completion is exact on coherent sheaves. However, since our ultimate goal is to investigate singular $\D$-modules where often $\M$ is not coherent, we will require the more general machinery. In fact, we have already seen that even in the smooth case, because of the derived nature of $\D$-modules, the derived completion is convenient (Theorem \ref{completed-complex-thm}).

We will denote by $D_{\mathrm{comp}}(\O_X)$ the category of derived complete $\O_X$-modules. There is an obvious inclusion of $D_{\mathrm{comp}}(\O_X)$ into $D(\O_X)$ and by general theory this inclusion has a left adjoint we will call derived completion. An expression for this is possible by noting that, whenever certain higher Tor functors vanish, $M / I^r M \simeq M \otimes_A A/I^r$, such that
\[
    \widehat{M} = \varprojlim_r \left(M \otimes_A \faktor{A}{I^r}\right).
\]
A desirable substitute for this construction in the derived category is given by resolving the quotient with the Koszul complex, yielding
\begin{equation}\label{derived-completion-eq}
    \widehat{M^{\sbullet}} = \R\varprojlim_r \big( M^{\sbullet} \otimes^{\mathbf{L}}_A \mathrm{Kos}(A; f_1^r, \ldots, f_\tau^r)\big)
\end{equation}
with $f_1, \ldots, f_\tau$ being generators of $I$.

The derived completion carries higher homological information correcting the failure for the underived completion of an object to be exact, and in particular, detects torsion and limit phenomena the underived completion does not see. Taking the cohomology of the underived completion therefore forgets higher obstructions and is generally not the correct set of invariants to take of a completed sheaf, whereas the derived completion contains the corrected invariants of a completed sheaf. In fact taking
\[
    H^\ast(X, \widehat{\F}^{\sbullet}) = \mathbf{H}(\R\Gamma(X, \widehat{\F}^{\sbullet}))
\]
we can see that
\[
    \R\Gamma(X, \widehat{\F}^{\sbullet}) = \widehat{\R\Gamma(X, \F^{\sbullet})}
\]
and hence by a certain spectral sequence argument 
\[
    H^\ast(X, \widehat{\F}^{\sbullet}) = \mathbf{H}(\widehat{\R\Gamma(X, \F^{\sbullet})}),
\]
so that the cohomology of the derived completion is the correct completion of the cohomology. Another useful property is that for a finitely-generated ideal, taking the derived completion along $\I$ is right adjoint {\it via} the tensor-hom adjunction to taking the derived local cohomology with respect to $\I$ (and {\it vice versa} with derived local cohomology the left adjoint). This result is due to \cite{tarrio1997local} where they provide a sheafification of the celebrated Greenlees--May duality \cite{greenlees1992derived}.

\section{Derived completed Spencer cohomology}\label{derived-spencer-sec}

We will now assume $\D^{\sbullet}_X$ and all $\D^{\sbullet}_X$-modules $\M^{\sbullet}$ are objects of the (unbounded) derived category of sheaves, and that $Y$ is singular and admits an imbedding into a smooth scheme (according to our conventions, of finite type) $X$. Then we will study the derived completion of the Spencer complex. Some digressions will be required about the behaviour of $\D$-modules and the Spencer complex in the derived complete category in order to define the derived completion of the Spencer complex and discuss its relation to the Spencer complex of the derived completion of the ring of differential operators $\Sp^{\sbullet}(\widehat{\D_X})$. From there it will be simple to reproduce the derived completed Spencer complex by resolving a certain derived completed derived tensor product. This will require reasoning about the exactness of the complex, linking our definition to the motivation from \S\ref{vinogradov-sec}. 

Just as in \S3, we can define the bounded derived category of quasi-coherent $\D_Y$-modules as the full subcategory of the bounded derived category quasi-coherent $\D_X$-modules consisting of complexes whose cohomology sheaves are supported on $Y$. Note that a more general version of Kashiwara's equivalence is proven in \cite[\S VI.7]{borel} without using exactness of the closed imbedding. Note also that derived completion with respect to an ideal of depth $d$ sends $D^{[a, b]}(\O_X)$ to $D^{[a - d, b]}(\O_X)$, meaning for a finitely-generated ideal, the adjunction described above descends to the bounded derived category, as do $i_+$ and $i^!$.

Our main claim will be that the derived completion of the derived tensor product is a suitable definition of Spencer cohomology in the derived category in the sense that its cohomology sheaves are completed Spencer cohomology. 

We will see that it is somewhat more straightforward to discuss things in the derived context. In general to compute something like \eqref{derived-completion-eq} we will want to also resolve $M^{\sbullet}$ and take the derived limit of the resulting total complex. To begin let $X$ be smooth and $\M^{\sbullet}_X$ a bounded chain complex of $\D_X$-modules. 

\begin{prop}\label{quasi-iso-prop}
    The total complex $\mathrm{Tot}(\M^{\sbullet}_X \otimes \Sp^{\sbullet}(\D_X))$ is a resolution of $\M^{\sbullet}_X$.
\end{prop}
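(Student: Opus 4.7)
The plan is to realise $\mathrm{Tot}(\M^{\sbullet}_X \otimes \Sp^{\sbullet}(\D_X))$ as the totalisation of a double complex whose rows come from tensoring each $\M^p_X$ with the augmented Spencer resolution $\Sp^{\sbullet}(\D_X) \to \O_X$, and then to conclude by the standard spectral sequence of a double complex. The key input is $\O_X$-flatness of each term $\D_X \otimes_{\O_X} \bigwedge^i \T_X$, which will allow the quasi-isomorphism $\Sp^{\sbullet}(\D_X) \to \O_X$ to survive tensoring with an arbitrary $\O_X$-module. Here the tensor product is taken over $\O_X$.

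First I would verify $\O_X$-flatness of the Spencer terms. On smooth $X$ the sheaf $\D_X$ is $\O_X$-flat: the order filtration has associated graded $\mathrm{Sym}_{\O_X}(\T_X)$, which is locally free since $\T_X$ is, so each filtration stage $F^p\D_X$ is locally free over $\O_X$ (the symbol sequences split locally), and $\D_X = \varinjlim_p F^p\D_X$ is a filtered colimit of locally free modules, hence flat. Each $\D_X \otimes_{\O_X} \bigwedge^i \T_X$ is then flat as a tensor product of flat $\O_X$-modules. Combined with Remark~\ref{resolution-remark} (the local Koszul argument augmenting $\Sp^{\sbullet}(\D_X)$ by $\O_X$ to an exact sequence), this realises $\Sp^{\sbullet}(\D_X) \to \O_X$ as an $\O_X$-flat resolution.

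Next I would observe that for each fixed $p$ the row $\M^p_X \otimes_{\O_X} \Sp^{\sbullet}(\D_X)$ remains a resolution of $\M^p_X$, since tensoring a single $\O_X$-module against an $\O_X$-flat exact sequence preserves exactness. Assembling the double complex $\bigl\{\M^p_X \otimes_{\O_X} \Sp^q(\D_X)\bigr\}_{p,q}$ and filtering by the Spencer direction yields a spectral sequence with $E_1^{p,q} = H^q\bigl(\M^p_X \otimes_{\O_X} \Sp^{\sbullet}(\D_X)\bigr)$ concentrated in $q=0$ with value $\M^p_X$. The sequence collapses at $E_2$ to the cohomology of $\M^{\sbullet}_X$, giving the desired quasi-isomorphism $\mathrm{Tot}\bigl(\M^{\sbullet}_X \otimes \Sp^{\sbullet}(\D_X)\bigr) \simeq \M^{\sbullet}_X$. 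Boundedness of $\M^{\sbullet}_X$ (assumed at the start of the section) removes any convergence issues.

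The main technical point is the appeal to $\O_X$-flatness of $\D_X$, which uses smoothness essentially: on a singular ambient scheme the stages $F^p\D_X$ can fail to be locally free over $\O_X$ and the preservation of exactness under tensoring with $\Sp^{\sbullet}(\D_X)$ breaks down. This clarifies why the derived completion framework of the subsequent subsections is required when $Y$ is singular, since the flat base change step is exactly what must be replaced by derived computation.
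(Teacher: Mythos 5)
Your proof is correct and follows essentially the same route as the paper's: assemble the double complex $\{\M^p_X \otimes \Sp^q(\D_X)\}$, filter in the Spencer direction, and collapse the resulting spectral sequence at the first page using boundedness. In fact you go slightly further than the paper, which simply \emph{supposes} that each column $F^{p,\sbullet}\to \M^p_X$ is a flat resolution; your verification that each $\D_X\otimes_{\O_X}\bigwedge^i\T_X$ is $\O_X$-flat on smooth $X$ (via the order filtration and $\varinjlim_p F^p\D_X$) supplies exactly the hypothesis the paper leaves implicit.
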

\begin{proof}
    Suppose that for each $i$, we have a quasi-isomorphism
    \[
    F^{i,{\sbullet}} \longrightarrow M^i
    \]
    where $F^{i,{\sbullet}}$ is a bounded complex of flat $\mathcal{O}_X$-modules. Then $F^{{\sbullet}, {\sbullet}}$ forms a double complex with $i$ indexing columns. We claim the total complex is quasi-isomorphic to $M^{\sbullet}$. To show this we can use the standard spectral sequence argument for double complexes. Filter $\mathrm{Tot}(F^{\bullet,\bullet})$ by columns (the $i$-index). The $E_1$ page is
    \[
        E_1^{p,q} = H^q(F^{p,\bullet}) \simeq 
        \begin{cases} 
            M^p, & q=0,\\
            0, & q\neq 0,
        \end{cases}
    \]
    by the assumption that $F^{p,\bullet} \to M^p$ is a quasi-isomorphism. We know the spectral sequence converges to the cohomology of the total complex:
        \[
        E_1^{p,q} \implies H^{p+q}(\mathrm{Tot}(F^{\bullet,\bullet})).
        \]
    Since $E_1^{p,q}=0$ for $q\neq 0$, the spectral sequence collapses at $E_1$ and
    \[
        H^i(\mathrm{Tot}(F^{{\sbullet}, {\sbullet}})) \simeq H^i(M^{\sbullet}).
    \]
    The natural map
    \[
    \mathrm{Tot}(F^{{\sbullet}, {\sbullet}}) \longrightarrow M^{\sbullet}
    \]
    induces the identity on $H^i$ {\it via} the augmentation maps $F^{p,\bullet}\to M^p$, so it is a quasi-isomorphism between the total complex and $M^{\sbullet}$. Replacing $M^{\sbullet}$ with $\M_X^{\sbullet}$ and taking the total complex of its Spencer resolution, the statement is proven. 
\end{proof}

\begin{definition}\label{derived-complete-tensor-def}
    Let $K^{\sbullet} \mapsto \widehat{K^{\sbullet}}$ be the left adjoint \eqref{derived-completion-eq}. The derived completion of the derived tensor product 
    \[
        \widehat{K^{\sbullet} \otimes^{\mathbf{L}} L^{\sbullet}}
    \]
    satisfies a tensor-hom adjunction in $D_{\mathrm{comp}}(\O_X)$ such that it is identified as the tensor product in $D_{\mathrm{comp}}(\O_X)$. We will denote this as $- \,\,\widehat{\otimes}^{\mathbf{L}} - \colon D_{\mathrm{comp}}(\O_X) \times D_{\mathrm{comp}}(\O_X) \to D_{\mathrm{comp}}(\O_X)$. In particular, we have 
    \[
        \widehat{K^{\sbullet} \otimes^{\mathbf{L}} L^{\sbullet}} = \widehat{K^{\sbullet}}\,\,\widehat{\otimes}^{\mathbf{L}} \widehat{L^{\sbullet}}.
    \]
\end{definition}

\begin{lemma}
    Let $\F$ be a sheaf of $\O_X$-modules, $\I$ the ideal of definition of some subsheaf generated by sections $f_1, \ldots, f_n$, and $\widehat{\F}_{\I}$ the $\I$-adic completion of $\F$. Let $\mathcal{J}$ be some other ideal in $\O_X$ generated by sections $h_1, \ldots, h_n$. There exists a Koszul complex 
    \[
        \widehat{\mathrm{Kos}(\F; h_1, \ldots, h_n)_{\I}}
    \]
    which is exact when a regular sequence can be found in $\widehat{\F}$ whose intersection with $\I$ is empty.
\end{lemma}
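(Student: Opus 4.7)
The plan is to recognise the completed Koszul complex as the Koszul complex built on the completion, and then to reduce exactness to the standard regular-sequence criterion.

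First, I would observe that $\mathrm{Kos}(\F; h_1, \ldots, h_n)$ is a bounded complex whose terms are finite direct sums of copies of $\F$. Since $\I$-adic completion commutes with finite direct sums and sends the multiplication maps by $h_i$ to multiplication by the images of $h_i$ in $\widehat{\F}_{\I}$, there is a canonical isomorphism of complexes
\[
    \widehat{\mathrm{Kos}(\F; h_1, \ldots, h_n)_{\I}} \simeq \mathrm{Kos}\bigl(\widehat{\F}_{\I};\, h_1, \ldots, h_n\bigr).
\]
This is a purely formal step but it changes the question from a statement about completions of chain maps into a statement about a Koszul complex on a single module.

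Second, I would invoke the standard criterion: at the stalks (and hence globally, since the statement is local on $X$), a Koszul complex on a module $M$ along a finite sequence is acyclic in positive degrees precisely when that sequence is regular on $M$. Thus the lemma reduces to showing that the hypothesised regular sequence in $\widehat{\F}$, which by assumption is disjoint from $\I$, persists as a regular sequence after passage to $\widehat{\F}_{\I}$ and, after generating the same ideal up to radical as $(h_1, \ldots, h_n)$ inside $\widehat{\F}_{\I}$, forces regularity of $(h_1, \ldots, h_n)$ on $\widehat{\F}_{\I}$ as well. The disjointness condition is used to guarantee that no element of the regular sequence becomes a zero divisor in any quotient $\widehat{\F}/\I^r\widehat{\F}$; taking the inverse limit along the $\I$-adic tower preserves injectivity, so regularity of the sequence survives completion.

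The main obstacle will be the last transfer: moving from the abstract regular sequence provided by the hypothesis to the concrete sequence $h_1, \ldots, h_n$ generating $\mathcal{J}$. The cleanest route is to use that Koszul homology and depth depend only on the radical of the ideal generated by a sequence (see the standard results in \cite[\S1]{bruns1998cohen}): provided the hypothesised regular sequence cuts out, up to radical in $\widehat{\F}_{\I}$, the same ideal as $\mathcal{J}$, regularity transfers and exactness follows. If the hypothesis is read in the stronger sense that $(h_1, \ldots, h_n)$ is itself the regular sequence, then this step is automatic and the argument collapses to steps one and two; otherwise a short radical-comparison argument closes the gap.
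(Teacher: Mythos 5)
Your argument is correct in substance but takes a genuinely different route from the paper's. The paper proves the lemma by unwinding the formula for derived completion as a homotopy limit, writing $\widehat{\mathrm{Kos}(\F;h_1,\ldots,h_n)}_{\I}$ as $\R\varprojlim_r\bigl(\mathrm{Kos}(\F;h_1^r,\ldots,h_n^r)\otimes^{\mathbf{L}}\mathrm{Kos}(\F;f_1^r,\ldots,f_n^r)\bigr)$, and then arguing that when $\mathcal{J}$ contains a regular sequence one tensor factor is ``chain homotopic to zero,'' so the derived limit is homologically trivial. You instead identify the completed Koszul complex with the Koszul complex on the completion and then invoke the classical depth/regular-sequence criterion. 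Two observations on the comparison. First, your step one is precisely the content of the \emph{next} proposition in the paper (proved there by citation to Shaul), so you are front-loading a fact the paper deliberately defers; logically this is harmless, but it means your proof of the lemma subsumes the proposition that follows it. Second, your route is arguably the more robust one: the paper's nullhomotopy claim is only literally true when the $h_i$ generate the unit ideal (the Koszul complex of a regular sequence is a resolution of $\F/\mathcal{J}\F$, not a contractible complex), whereas your reading of ``exact'' as ``acyclic in positive degrees,'' combined with the standard fact that Koszul acyclicity is governed by $\mathrm{depth}(\mathcal{J},\widehat{\F}_{\I})$ and depends only on the radical of $\mathcal{J}$, lands on the statement the lemma actually needs. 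You also correctly flag the one genuine ambiguity in the hypothesis --- the statement does not say the regular sequence is related to $\mathcal{J}$ --- and your proviso (that it cut out the same radical ideal as $(h_1,\ldots,h_n)$) matches the paper's own implicit reading, which is ``suppose $\mathcal{J}$ contains a regular sequence.'' One caveat to keep in mind: the lemma lives in the derived-completion section, and your appeal to ``completion commutes with finite direct sums'' silently conflates the underived $\I$-adic completion $\widehat{\F}_{\I}$ with the derived completion; these agree only under finiteness hypotheses on $\F$, a point the paper itself glosses over but which you should state if $\F$ is not assumed coherent.
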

\begin{proof}
    The Koszul complex of $s\colon \F \to \O_X$ is the chain complex 
    \[
        0 \to \bigwedge^n \F \to \ldots \to \bigwedge^1 \F \to \O_X \to 0
    \]
    with differential $\dd = \sum (-1)^{i+1} s(h_i) h_1 \wedge \ldots \wedge \hat{h_i} \wedge \ldots \wedge h_n$
    whose homology in degree zero is 
    \[
        H^0\left(\mathrm{Kos}(\F; h_1, \ldots, h_n)\right) \simeq \faktor{\F}{\mathcal{J}\F} = \faktor{\O_X}{\mathcal{J}} \otimes \F.
    \]
    From the disjointness of the ideal the derived completion of the Koszul complex is 
    \begin{equation}\label{complete-koszul-eq}
        \widehat{\mathrm{Kos}(\F; h_1, \ldots, h_n)}_{\I} = \R\varprojlim_r \big( \mathrm{Kos}(\F; h^r_1, \ldots, h^r_n) \otimes^{\mathbf{L}}_A \mathrm{Kos}(\F; f_1^r, \ldots, f_n^r)\big) 
    \end{equation}
    and the degree zero homology of the completion is 
    \[
        H^0\left(\widehat{\mathrm{Kos}(\F; h_1, \ldots, h_n)}_{\I}\right) \simeq \faktor{\widehat{\F}_{\I}}{\mathcal{J} \widehat{\F}_{\I}}.
    \]    
    Now note that for a derived tensor product where at least one factor is chain homotopic to zero, the entire product has a chain mapping to the zero object. Suppose $\mathcal{J}$ contains a regular sequence. We now have in \eqref{complete-koszul-eq} the derived limit of the zero object, to which we apply the fact that the derived limit of the zero object is homologicaly trivial.
\end{proof}

\begin{prop}
    The complex \eqref{complete-koszul-eq} is isomorphic to the the Koszul complex of the completion of $\F$, and if a sequence is regular in $\F$, then its image is regular in $\widehat{\F}$. 
\end{prop}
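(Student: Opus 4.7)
The plan is to exploit the two-variable tensorial nature of the Koszul complex together with the adjunction properties established in Definition \ref{derived-complete-tensor-def}. First I would decompose
\[
    \mathrm{Kos}(\F; h_1, \ldots, h_n) = \F \otimes_{\O_X}^{\mathbf{L}} \mathrm{Kos}(\O_X; h_1, \ldots, h_n),
\]
noting that $\mathrm{Kos}(\O_X; h_1, \ldots, h_n)$ is a bounded complex of finite free $\O_X$-modules, so the derived tensor agrees with the underived tensor in each degree. Applying Definition \ref{derived-complete-tensor-def} then yields
\[
    \widehat{\mathrm{Kos}(\F; h_1, \ldots, h_n)}_{\I} \simeq \widehat{\F}_{\I} \,\,\widehat{\otimes}^{\mathbf{L}} \widehat{\mathrm{Kos}(\O_X; h_1, \ldots, h_n)}_{\I}.
\]

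Second, I would identify $\widehat{\mathrm{Kos}(\O_X; h_1, \ldots, h_n)}_{\I}$ with $\mathrm{Kos}(\widehat{\O_X}_{\I}; h_1, \ldots, h_n)$. Each term $\bigwedge^i \O_X^n$ is a finite direct sum of copies of $\O_X$; since derived completion is a left adjoint it preserves finite coproducts, so the termwise derived completion agrees with the corresponding exterior power over $\widehat{\O_X}_{\I}$. The Koszul differential is built from $\O_X$-linear multiplication maps by the $h_i$, which pass through completion; the induced differentials are thus the Koszul differentials for the images of the $h_i$ in $\widehat{\O_X}_{\I}$. As $\mathrm{Kos}(\widehat{\O_X}_{\I}; h_1, \ldots, h_n)$ is a bounded complex of finite free $\widehat{\O_X}_{\I}$-modules, the derived completed derived tensor product collapses to the ordinary tensor, giving
\[
    \widehat{\F}_{\I} \otimes_{\widehat{\O_X}_{\I}} \mathrm{Kos}(\widehat{\O_X}_{\I}; h_1, \ldots, h_n) = \mathrm{Kos}(\widehat{\F}_{\I}; h_1, \ldots, h_n),
\]
which establishes the first assertion.

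For the second assertion I would invoke the standard criterion that $(h_1, \ldots, h_n)$ is regular on $\F$ if and only if $\mathrm{Kos}(\F; h_1, \ldots, h_n)$ is acyclic in positive degrees, i.e.\ resolves $\F / (h_1, \ldots, h_n)\F$. Derived completion is a functor on the derived category and hence preserves quasi-isomorphisms; combining this with the first part yields a quasi-isomorphism
\[
    \mathrm{Kos}(\widehat{\F}_{\I}; h_1, \ldots, h_n) \xrightarrow{\sim} \widehat{\F / (h_1, \ldots, h_n)\F}_{\I}.
\]
Acyclicity of $\mathrm{Kos}(\widehat{\F}_{\I}; h_1, \ldots, h_n)$ in positive degrees is precisely the statement that the image sequence is regular on $\widehat{\F}_{\I}$.

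The main obstacle will be the careful bookkeeping of derived versus underived completion. The identification in the second step rests on the finite rank of each Koszul term together with commutation of derived completion with finite direct sums, and the subsequent collapse from the derived completed derived tensor to the underived tensor requires flatness of $\mathrm{Kos}(\widehat{\O_X}_{\I}; h_1, \ldots, h_n)$ over $\widehat{\O_X}_{\I}$; both are routine under the Noetherian hypothesis but deserve explicit verification.
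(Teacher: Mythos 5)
Your argument is correct in substance but reaches the key isomorphism by a different route than the paper. The paper simply cites Shaul [\S 2] for the identification $\widehat{\mathrm{Kos}(\F; h_1, \ldots, h_n)}_{\I} \simeq \mathrm{Kos}(\widehat{\F}_{\I}; \widehat{h_1}, \ldots, \widehat{h_n})$ and then runs the same regularity argument you do; you instead derive the identification from first principles by factoring $\mathrm{Kos}(\F; h_1,\ldots,h_n) \simeq \F \otimes^{\mathbf{L}}_{\O_X} \mathrm{Kos}(\O_X; h_1,\ldots,h_n)$, invoking the monoidal property of $\widehat{\otimes}^{\mathbf{L}}$ from Definition \ref{derived-complete-tensor-def}, and using that derived completion commutes with finite direct sums to complete the finite free factor termwise. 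This is a genuine gain: it makes visible exactly where the finite rank of the Koszul terms is used and keeps the proof internal to the machinery already set up in \S\ref{theory-and-properties-sec}, at the cost of a little bookkeeping that the citation hides. Two caveats apply equally to your proof and to the paper's. First, your quasi-isomorphism $\mathrm{Kos}(\widehat{\F}_{\I}; h_1,\ldots,h_n) \xrightarrow{\sim} \widehat{\F/(h_1,\ldots,h_n)\F}_{\I}$ only yields acyclicity in positive degrees if the derived completion of the quotient is concentrated in degree zero; this holds when $\F/(h_1,\ldots,h_n)\F$ is coherent (so derived and classical completion agree) but is not automatic for the general $\O_X$-modules in play in this section, and should be stated as a hypothesis. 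Second, passing from acyclicity of $\mathrm{Kos}(\widehat{\F}_{\I}; h_1,\ldots,h_n)$ in positive degrees back to regularity of the image sequence is the converse direction of the Koszul criterion, which requires the usual local Noetherian hypotheses with the $h_i$ in the radical; you flag this as needing verification, which is the right instinct, and the paper's proof elides the same point.
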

\begin{proof}
    By the arguments in \cite[\S 2]{SHAUL2021107806} we have 
    \[
        \widehat{\mathrm{Kos}(\F; h_1, \ldots, h_n)}_{\I} \simeq \mathrm{Kos}(\widehat{\F}_{\I}; \widehat{h_1}, \ldots, \widehat{h_n})
    \]
    so that the homology is identical, and in particular, the degree zero homology of the Koszul complex of the completion is also $\widehat{\F}_\I / \mathcal{J}\widehat{\F}_{\I}$. Now the second part of the claim follows from the isomorphism: whenever $h_1, \ldots, h_n$ is regular, $\mathrm{Kos}(\widehat{\F}_{\I}; \widehat{h_1}, \ldots, \widehat{h_n})$ is exact.
\end{proof}

\begin{theorem}\label{derived-spencer-thm} 
On a smooth scheme $X$, the derived completion of the Spencer complex is
\phantom{}
    \vskip0.5em
    \begin{thmnum}
    \setlength\itemsep{0.5em}
    \item the derived completed Spencer cohomology \label{cderived-spencer-thm-1}
    \item  stalk-wise the Spencer complex of the derived completion of a $\D_X$-module $\M^{\sbullet}_X$ \label{derived-spencer-thm-2}
    \item an acyclic resolution of $\O_{\widehat{X}}$ for $\M^{\sbullet}_X = \D_X$. \label{derived-spencer-thm-3}
    \end{thmnum}
    \vskip0.5em
\end{theorem}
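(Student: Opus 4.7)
My plan is to prove the three parts in turn, with each reduction to a Koszul-theoretic statement at the stalks---the smoothness of $X$ is what makes this possible, since in \'etale-local coordinates the Spencer complex becomes a Koszul complex on a regular sequence of commuting derivations $\partial_1, \ldots, \partial_n$. The unifying technical input is the Koszul-commutation proposition just established, together with the fact (Definition \ref{derived-complete-tensor-def}) that derived completion commutes with derived tensor products.

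For (i) I would parallel Proposition \ref{formal-point-prop} in the derived setting. Starting from the expression $g_+\M^{\sbullet}_X \simeq \mathbf{R}g_*(\M^{\sbullet}_X \otimes^{\mathbf{L}}_{\D_X} g^*\D_{\Spec k})$ and applying the derived completion functor, I would use that derived completion commutes with $\mathbf{R}g_*$ (since the underlying topological space of $\widehat{X}$ is $Y$) together with the derived analogue of Lemma \ref{formal-point-lemma} supplying $\widehat{g^*\D_{\Spec k}} \simeq \O_{\widehat{X}}$ in $D_{\mathrm{comp}}(\O_X)$. This identifies the derived completion of the Spencer complex with $\mathbf{R}\widehat{g}_*(\widehat{\M^{\sbullet}_X}\,\,\widehat{\otimes}^{\mathbf{L}}_{\widehat{\D_X}} \O_{\widehat{X}})$, which is the hypercohomology of the derived completion of the Spencer complex, hence the derived completed Spencer cohomology by construction.

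For (ii) I would invoke Proposition \ref{quasi-iso-prop} to replace $\M^{\sbullet}_X$ by the total complex of its Spencer resolution, then pass to stalks. At each stalk, the Spencer complex of $\D_X$ is a Koszul complex on the regular sequence of commuting derivations in an \'etale-local coordinate basis of $\T_X$. By the Koszul-commutation proposition, derived completing this stalk-wise Koszul complex yields the Koszul complex on the derived completion of the sequence---which remains regular---and this is precisely the stalk of $\Sp^{\sbullet}(\widehat{\M^{\sbullet}_X})$. A double-complex spectral sequence argument of the kind used in Proposition \ref{quasi-iso-prop} assembles these stalk-wise identifications into the claimed global statement. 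Part (iii) then follows as a direct specialisation: for $\M^{\sbullet}_X = \D_X$, the Spencer complex resolves $\O_X$ by locally free $\D_X$-modules (Remark \ref{resolution-remark}); since $\O_X$ is coherent its derived completion agrees with $\O_{\widehat{X}}$; and the stalk-wise Koszul identification from (ii) shows the derived completed complex is locally a Koszul complex on the regular sequence of completed partial derivatives, hence exact in positive degrees.

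The main obstacle will be part (ii): the identification is only delicate when $\M^{\sbullet}_X$ fails to be $\O_X$-coherent, which is exactly the situation that motivated passing to the derived completion. The difficulty is in checking that, even without coherence, the stalk of the derived completion of $\M^{\sbullet}_X \otimes^{\mathbf{L}}_{\O_X} \Sp^{\sbullet}(\D_X)$ agrees with the Spencer complex of the stalk of $\widehat{\M^{\sbullet}_X}$. The key leverage is that the Spencer differential is expressible through the Koszul differential of the regular sequence of coordinate derivations, so the compatibility reduces to the Koszul-commutation proposition just proven, applied tensor-factor by tensor-factor against $\M^{\sbullet}_X$.
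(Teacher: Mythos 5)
Your proposal is correct and follows essentially the same route as the paper: part (i) via commuting derived completion past $\mathbf{R}g_*$ and the derived tensor product (the paper phrases this directly through $\mathbf{R}\Gamma$ and Proposition \ref{quasi-iso-prop}, which is the same content since the pushforward to the point is global sections), part (ii) via the Koszul-completion proposition at stalks, and part (iii) by coherence of $\O_X$ and $\bigwedge^i\T_X$ together with stalk-wise Koszul exactness. You actually supply more detail than the paper does for part (ii), where the paper only writes ``follows from the arguments above''; your explicit reduction to the Koszul-commutation statement applied factor by factor is the intended argument.
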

\begin{proof}
    \phantom{}
    \vskip0.5em
    \begin{enumerate}
    \setlength\itemsep{0.5em}
    \item We can use the fact that 
    \[
        H^\ast(X, \widehat{\M_X^{\sbullet}}) = \mathbf{H}(\R\Gamma(X, \widehat{\M_X^{\sbullet}}))
    \]
    and Proposition \ref{quasi-iso-prop} to obtain
    \[
        H^\ast(X, \widehat{\M_X}^{\sbullet}) = \mathbf{H}(\R\Gamma(X, \widehat{\Sp^{\sbullet}(\M_X^{\sbullet}})).
    \]
    \item Follows from the arguments above.
    \item Also by the above, stalk-wise the degree zero homology is the derived completion $\widehat{\O_X}$. The terms in the complex are the derived completions of exterior powers of the tangent sheaf $\T_X$. Since completion is exact on finitely-generated modules over Noetherian rings and $\O_X$ and all $\wedge^i \T_X$ are coherent on a smooth scheme, it follows by our previous arguments that we have an acyclic chain complex. 
    \end{enumerate}
    \vskip0.5em
\end{proof}

Now we will cast our eyes towards the singular (imbeddable) case. As before we will take the derived completion of $f_+$. Very little needs modification: since $f_+$ is a $\D_X$-module supported on $Y$, we can apply the results previously seen.  

\begin{theorem}
    The derived completion of the pushforward to the point computes derived completed Spencer cohomology. When $\M_X$ is the image of some $\M_Y$ under $f_\natural$ this is acyclic.
\end{theorem}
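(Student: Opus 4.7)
The plan is to lift the proof of Theorem \ref{completed-complex-thm} part (\ref{completed-complex-thm-2}) to the derived setting, reusing the machinery of sections \ref{theory-and-properties-sec} and \ref{derived-spencer-sec}. Each non-formal step of the earlier underived argument has a direct analogue in the derived completed category, which was developed precisely for this purpose.

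For the first assertion, start from the identity $g_+\M_X \simeq \mathbf{R}g_*\bigl(\M_X \otimes^{\mathbf{L}}_{\D_X} g^*\D_{\Spec k}\bigr)$ and apply the derived completion functor. Using the compatibility of derived completion with $\mathbf{R}g_*$ (the identity $\mathbf{R}\Gamma(X, \widehat{\F}^{\sbullet}) = \widehat{\mathbf{R}\Gamma(X, \F^{\sbullet})}$ from section \ref{theory-and-properties-sec}) and with the derived tensor product in $D_{\mathrm{comp}}$ (Definition \ref{derived-complete-tensor-def}), one obtains
\[
\widehat{g_+\M_X} \simeq \mathbf{R}g_*\bigl(\widehat{\M_X}\,\,\widehat{\otimes}^{\mathbf{L}}_{\D_X} \widehat{g^*\D_{\Spec k}}\bigr).
\]
Next rewrite $\widehat{g^*\D_{\Spec k}} \simeq \O_{\widehat{X}}$ by Lemma \ref{formal-point-lemma}, and substitute the acyclic resolution $\widehat{\Sp^{\sbullet}(\D_X)}$ of $\O_{\widehat{X}}$ furnished by Theorem \ref{derived-spencer-thm} part (\ref{derived-spencer-thm-3}). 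Proposition \ref{quasi-iso-prop} then identifies the resulting derived completed derived tensor product with $\widehat{\Sp^{\sbullet}(\M_X)}$, and taking $\mathbf{R}\Gamma$ and cohomology recovers the derived completed Spencer cohomology as in Theorem \ref{derived-spencer-thm} part (\ref{cderived-spencer-thm-1}).

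For the second assertion, observe that $\M_X \simeq f_\natural\M_Y$ is by construction $\I_Y$-torsion, being of the form $\M_X' / \I_Y \M_X'$ (see section \ref{naive-singular-sec}), hence set-theoretically supported on $Y$. By Theorem \ref{derived-spencer-thm} part (\ref{derived-spencer-thm-2}) the derived completed Spencer complex is, stalk-wise on $\widehat{X}$, the Spencer complex of $\widehat{\M_X}$; along \'etale-local smooth coordinates of the ambient $X$ the corresponding derivations form a regular sequence on $\widehat{X}$, giving a Koszul complex whose acyclicity follows from the Koszul-type argument in the proof of Theorem \ref{derived-spencer-thm} part (\ref{derived-spencer-thm-3}), combined with the preservation of regularity under completion established in the proposition preceding that theorem.

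The hard part will be verifying the functor commutations without assuming $\M_X$ is coherent as an $\O_X$-module. In the coherent setting of section \ref{spencer-complex-section} the exactness of underived completion sufficed; here it must be replaced by the sheafified Greenlees--May adjunction of \cite{tarrio1997local} between derived completion and derived local cohomology, noted at the end of section \ref{theory-and-properties-sec}. A secondary technical point is confirming that the $\I_Y$-torsion structure of $f_\natural\M_Y$ interacts correctly with level-wise completion of the Spencer complex, so that the acyclicity in the second assertion is a genuinely global statement about the completed complex rather than merely a stalk-wise one; this should follow from tracking how completion acts as a pullback along $\kappa$ as in Definition \ref{completed-sheaf-def}.
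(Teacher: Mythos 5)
Your treatment of the first assertion is essentially the paper's: apply derived completion to $\mathbf{R}g_*\bigl(\M_X \otimes^{\mathbf{L}}_{\D_X} g^*\D_{\Spec k}\bigr)$, commute it past the pushforward and the tensor product, and use Proposition \ref{quasi-iso-prop} to identify the result with $\widehat{\Sp^{\sbullet}(\M_X^{\sbullet})}$; the paper states this more tersely but the content is the same.

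For the second assertion you diverge from the paper, and the route you choose has a gap. You correctly note that $f_\natural\M_Y$ is set-theoretically supported on $Y$, but the acyclicity mechanism you then invoke --- \'etale-local coordinate derivations forming a regular sequence, so that the completed Spencer complex is an exact Koszul complex as in the proof of Theorem \ref{derived-spencer-thm}(3) --- is precisely the argument for $\M_X^{\sbullet} = \D_X$, where the derivations act freely. It does not transfer to a torsion module of the form $\I_Y\M_X'\backslash\M_X'$: the Spencer complex of such an $\M_X$ computes $\M_X \otimes^{\mathbf{L}}_{\D_X}\O_X$, which is not concentrated in degree zero for a general $\D_X$-module, and the generators of $\I_Y$ (and the associated data) are zero-divisors on an $\I_Y$-torsion sheaf, so "preservation of regularity under completion" has nothing to act on. What actually forces the vanishing is the support condition itself: the paper uses the Greenlees--May identification $\widehat{\M_X^{\sbullet}} \simeq \mathbf{R}\mathcal{H}om(\mathbf{R}\Gamma_Y(X,\D_X),\M_X^{\sbullet})$, so that for a complex whose cohomology is set-theoretically supported on $Y$ the derived completion introduces no higher terms and the acyclicity reduces to Kashiwara's equivalence for the closed imbedding. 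You in fact name this adjunction in your closing paragraph as the tool needed in the non-coherent setting, but you leave it as a "remaining difficulty" rather than making it the engine of the second assertion; as written, the Koszul step would fail and the proof of acyclicity is not complete.
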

\begin{proof}
    Take the derived completion with respect to the ideal of definition of $Y$. Since derived pushforwards commute with derived completion and we have
    \[
        \R f_*\left( \M^{\sbullet}_X \otimes^{\mathbf{L}}_{\D_X} \D_{X \to \Spec k} \right) = \R f_* \Sp^{\sbullet}(\M_X^{\sbullet}),
    \]
    it is implied by the resolution in Proposition \ref{quasi-iso-prop} that
    \[
        \widehat{\R f_*\left( \M^{\sbullet}_X \otimes^{\mathbf{L}}_{\D_X} \D_{X \to \Spec k} \right)} = \R f_* \widehat{\Sp^{\sbullet}(\M_X^{\sbullet})}.
    \]
    This proves the first claim. By Theorem \ref{derived-spencer-thm-2} this is stalk-wise equivalent to computing
    \[
        \R f_* \Sp^{\sbullet}(\widehat{\M_X^{\sbullet})}.
    \]
    Since (see \cite{tarrio1997local}) the derived $I$-adic completion of $\M^{\sbullet}_X$ satisfies
    \[
        \widehat{\M^{\sbullet}_X} \simeq \R\Hom(\R\Gamma_Y(X, \D_X), \M^{\sbullet}_X)
    \]
    we can interpret it as computing sections along the formal neighbourhood (in the $\D_X$-module sense; namely, allowing derivatives along the support). The cohomology of a derived completed $\M_X^{\sbullet}$ measures obstructions to extending sections along $Y$, hence, how much a complex of $\D_X$-modules $\M_X^{\sbullet}$ fails to be acyclic with respect to $\Gamma_Y$. If $\M_X^{\sbullet}$ is set-theoretically supported on $Y$, the higher derived completed cohomology vanishes. It follows that derived completed Spencer cohomology is acyclic for pushforwards by closed imbeddings.
\end{proof}

For this reason a modified version of Vinogradov's conjecture is true. This final corollary will follow from the theorem: 

\begin{corollary}
    The derived completed algebraic Spencer complex is acyclic if and only if a singular algebraic variety admits an imbedding into a smooth variety. 
\end{corollary}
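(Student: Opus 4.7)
The plan is to derive both directions of the biconditional from the immediately preceding theorem, with essentially no extra work. The derived completed algebraic Spencer complex of $\M_Y$ is defined by first transferring $\M_Y$ to a $\D_X$-module via $i_\natural$ along a closed imbedding $i\colon Y \hookrightarrow X$ into a smooth ambient scheme, forming the pushed-forward complex \eqref{pushed-forward-complex-eq}, and then taking the derived completion along the ideal of $Y$. This definition is only available when such an imbedding exists, and this observation takes care of the \emph{only if} direction almost by fiat: the object whose acyclicity we are asserting is only defined in the presence of the hypothesised imbedding.

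For the \emph{if} direction, suppose $Y$ admits a closed imbedding $i\colon Y \hookrightarrow X$ into a smooth scheme. Then any $\D_Y$-module $\M_Y$ produces, via Kashiwara's equivalence, a $\D_X$-module $\M_X = i_\natural \M_Y$ which is by construction set-theoretically supported on $i(Y)$. This places us in exactly the hypothesis of the preceding theorem: $\M_X$ is of the form $f_\natural \M_Y$ with $f=i$, and the theorem delivers that the derived completion of $i_+\M_Y$ — equivalently the derived completed Spencer complex of $\M_Y$ — is acyclic in positive degrees.

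The main technical point to verify is that support on $i(Y)$ is genuinely what makes the derived completed cohomology vanish. The theorem attributes this to the formula $\widehat{\M^{\sbullet}_X} \simeq \mathbf{R}\Hom(\mathbf{R}\Gamma_Y(X,\D_X), \M^{\sbullet}_X)$ from \cite{tarrio1997local}: when $\M^{\sbullet}_X$ is set-theoretically supported on $Y$, the local cohomology functor $\mathbf{R}\Gamma_Y$ acts as the identity on $\M^{\sbullet}_X$ and the derived completion is concentrated in degree zero. Thus the only content of the corollary beyond pure bookkeeping is the compatibility of $i_\natural$ with this support condition, which is the defining feature of Kashiwara's equivalence as recalled in \S\ref{naive-singular-sec}.

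I expect the only mild subtlety will be to phrase the \emph{only if} direction so that it is not dismissed as purely definitional. One way to give it slightly more content is to observe that even if one tried to define a Spencer complex on $Y$ intrinsically (as the Chevalley–Eilenberg–type complex on $\T_Y$), the failure of $\T_Y$ to be locally free on a singular scheme which does not imbed into a smooth one obstructs the Koszul-style exactness argument of Remark \ref{resolution-remark} — so outside the imbeddable case the construction genuinely has no acyclicity to assert. This remark is worth including to emphasise that imbeddability is not a purely formal hypothesis but is the precise input that allows the smooth-ambient completion argument of Theorem \ref{completed-complex-thm} and Theorem \ref{derived-spencer-thm} to proceed.
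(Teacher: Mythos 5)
Your proposal is correct and takes essentially the same approach as the paper, which offers no separate argument for this corollary beyond asserting that it follows from the preceding theorem; your reduction of the \emph{if} direction to that theorem via Kashiwara's equivalence (so that $i_\natural\M_Y$ is set-theoretically supported on $i(Y)$ and $\R\Gamma_Y$ acts as the identity) is exactly the intended reading. Your observation that the \emph{only if} direction is essentially definitional --- the derived completed Spencer complex is only constructed in the presence of an imbedding --- is an honest account of the actual logical content of the biconditional, and is if anything more explicit than the paper's own treatment.
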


\subsection*{Acknowledgements}

The authors thank Devlin Mallory for pointing out a gap in an intial version of the proof of Theorem \ref{only-if-thm}. DARS acknowledges support from the Einstein Chair programme at the Graduate Centre of the City University of New York. BB was supported by a studentship funded by Softmax AI. 

\bibliographystyle{myamsalpha}
\bibliography{main}

\providecommand{\bysame}{\leavevmode\hbox to3em{\hrulefill}\thinspace}
\providecommand{\MR}{\relax\ifhmode\unskip\space\fi MR }
\providecommand{\MRhref}[2]{%
  \href{http://www.ams.org/mathscinet-getitem?mr=#1}{#2}
}
\providecommand{\href}[2]{#2}
\begin{thebibliography}{BGG72}

\bibitem[BD91]{beilinson1991quantization}
Alexander Beilinson and Vladimir Drinfel'd, \emph{Quantization of {H}itchin’s integrable system and {H}ecke eigensheaves}, 1991.

\bibitem[BGG72]{bernvstein1972differential}
Joseph~H Bern{\v{s}}te{\u\i}n, Israel~M Gel’fand, and Sergei~I Gel’fand, \emph{Differential operators on a cubic cone}, Uspekhi Matematicheskikh Nauk \textbf{27} (1972), no.~1(163), 185--190 (Russian), translation in Russian Mathematical Surveys \textbf{27} (1972), no. 1, 169--174.

\bibitem[BH98]{bruns1998cohen}
Winfried Bruns and H~J{\"u}rgen Herzog, \emph{{C}ohen--{M}acaulay rings}, Cambridge Studies in Advanced Mathematics, vol.~39, Cambridge University Press, 1998.

\bibitem[Bor87]{borel}
Armand Borel, \emph{Algebraic {D}-modules}, Perspectives in Mathematics, vol.~2, Academic Press, 1987.

\bibitem[BZN04]{ben2004cusps}
David Ben-Zvi and Thomas Nevins, \emph{Cusps and $\mathcal{D}$-modules}, Journal of the American Mathematical Society \textbf{17} (2004), no.~1, 155--179.

\bibitem[EK06]{elashvili2006lie}
Alexander Elashvili and Giorgi Khimshiashvili, \emph{Lie algebras of simple hypersurface singularities}, Journal of Lie Theory \textbf{16} (2006), no.~4, 621--649.

\bibitem[ES18]{etingof2018poisson}
Pavel Etingof and Travis Schedler, \emph{Poisson traces, {D}-modules, and symplectic resolutions}, Letters in Mathematical Physics \textbf{108} (2018), no.~3, 633--678.

\bibitem[Ful84]{fulton}
William Fulton, \emph{Intersection theory}, Ergebnisse der Mathematik und ihrer Grenzgebiete, vol.~2, Springer, 1984.

\bibitem[GM92]{greenlees1992derived}
John P~C Greenlees and J~Peter May, \emph{Derived functors of {$I$}-adic completion and local homology}, Journal of Algebra \textbf{149} (1992), no.~2, 438--453.

\bibitem[GR14]{gaitsgory2011crystals}
Dennis Gaitsgory and Nick Rozenblyum, \emph{Crystals and {D}-modules}, Pure and Applied Mathematics Quarterly \textbf{10} (2014), no.~1, 57--154.

\bibitem[EGA I]{ega-i}
Alexandre Grothendieck, \emph{{\'E}l\'ements de g\'eom\'etrie alg\'ebrique: {I.} {Le} langage des sch\'emas}, Publications Math\'ematiques de l'IH\'ES \textbf{4} (1960), 5--228.

\bibitem[Gro66]{grothendieck1966rham}
\bysame, \emph{On the de {R}ham cohomology of algebraic varieties}, Publications Math{\'e}matiques de l'IH{\'E}S \textbf{29} (1966), no.~1, 95--103.

\bibitem[EGA IV]{ega-iv}
\bysame, \emph{{\'E}l\'ements de g\'eom\'etrie alg\'ebrique: {IV.} {{\'E}tude} locale des sch\'emas et des morphismes de sch\'emas, {quatri\`eme} partie}, Publications Math\'ematiques de l'IH\'ES \textbf{32} (1967), 5--361.

\bibitem[Gro68]{grothendieck1968crystals}
\bysame, \emph{Crystals and the de {R}ham cohomology of schemes}, Dix expos{\'e}s sur la cohomologie des sch{\'e}mas, Advanced Studies in Pure Mathematics, vol.~3, North-Holland Publishing Company, 1968, pp.~306--358.

\bibitem[GS66]{guillemin1966deformation}
Victor Guillemin and Shlomo Sternberg, \emph{Deformation theory of pseudogroup structures}, Memoirs of the American Mathematical Society, no.~64, American Mathematical Society, 1966.

\bibitem[Gui65]{guillemin1965integrability}
Victor Guillemin, \emph{The integrability problem for {$G$}-structures}, Transactions of the American Mathematical Society \textbf{116} (1965), 544--560.

\bibitem[Har66]{hartshorne1966residues}
Robin Hartshorne, \emph{Residues and duality}, Lecture Notes in Mathematics, vol.~20, Springer, 1966.

\bibitem[Har67]{Hartshorne1967}
\bysame, \emph{Local cohomology}, Lecture Notes in Mathematics, vol.~41, Springer-Verlag, Berlin, Heidelberg, 1967.

\bibitem[Har75]{hartshorne1975rham}
\bysame, \emph{On the de {R}ham cohomology of algebraic varieties}, Publications Math{\'e}matiques de l'IH{\'E}S \textbf{45} (1975), 5--99.

\bibitem[Hoc78]{hochster1978some}
Melvin Hochster, \emph{Some applications of the {F}robenius in characteristic 0}, Bulletin of the American Mathematical Society \textbf{84} (1978), no.~5, 886--912.

\bibitem[HTT07]{hotta2007d}
Ryoshi Hotta, Kiyoshi Takeuchi, and Toshiyuki Tanisaki, \emph{{$D$}-modules, perverse sheaves, and representation theory}, Progress in Mathematics, vol. 236, Springer, 2007.

\bibitem[Ill75]{illusie1975report}
Luc Illusie, \emph{Report on crystalline cohomology}, Algebraic Geometry---Arcata 1974, Proceedings of Symposia in Pure Mathematics, vol.~29, American Mathematical Society, 1975, pp.~459--478.

\bibitem[Joh71]{johnson1971some}
Joseph Johnson, \emph{Some homological properties of {S}pencer's cohomology theory}, Journal of Differential Geometry \textbf{5} (1971), no.~3-4, 341--351.

\bibitem[Kas78]{kashiwara1978holonomic}
Masaki Kashiwara, \emph{On the holonomic systems of linear differential equations, {II}}, Inventiones mathematicae \textbf{49} (1978), no.~2, 121--135.

\bibitem[Kov13]{kovacs2013singularities}
S{\'a}ndor~J Kov\'acs, \emph{Singularities of stable varieties}, Handbook of Moduli, vol.~2, Advanced Lectures in Mathematics, no.~25, International Press, 2013, pp.~159--203.

\bibitem[LH71]{lieberman1971duality}
David Lieberman and Miguel Herrera, \emph{Duality and the de {R}ham cohomology of infinitesimal neighborhoods}, Inventiones mathematicae \textbf{13} (1971), 97--124.

\bibitem[Mal66]{malgrange1966cohomologie}
Bernard Malgrange, \emph{Cohomologie de {S}pencer: d'apr{\`e}s {Q}uillen}, Publications du S\'eminaire de math\'ematiques d'Orsay, Secrétariat mathématique d'Orsay, 1966.

\bibitem[PW22]{pol2022homotopy}
Luca Pol and Jordan Williamson, \emph{The homotopy theory of complete modules}, Journal of Algebra \textbf{594} (2022), 74--100.

\bibitem[Qui64]{quillen1964formal}
Daniel~G Quillen, \emph{Formal properties of over-determined systems of linear partial differential equations}, Ph.D. thesis, Harvard University, 1964.

\bibitem[Sha21]{SHAUL2021107806}
Liran Shaul, \emph{{K}oszul complexes over {C}ohen--{M}acaulay rings}, Advances in Mathematics \textbf{386} (2021), 107806.

\bibitem[Spe62]{spencer1962deformation}
Donald~C Spencer, \emph{Deformation of structures on manifolds defined by transitive, continuous pseudogroups, {I, II}}, Annals of Mathematics \textbf{76} (1962), no.~2, 306--445.

\bibitem[Spe69]{spencer1969overdetermined}
\bysame, \emph{Overdetermined systems of linear partial differential equations}, Bulletin of the American Mathematical Society \textbf{75} (1969), 179--239.

\bibitem[TLL97]{tarrio1997local}
Leovigildo~Alonso Tarr{\'\i}o, Ana~Jerem{\'\i}as L{\'o}pez, and Joseph Lipman, \emph{Local homology and cohomology on schemes}, Annales scientifiques de l’Ecole normale superieure \textbf{30} (1997), no.~1, 1--39.

\bibitem[TLL99]{lipman}
\bysame, \emph{Duality and flat base change on formal schemes}, Studies in Duality on Noetherian Formal Schemes and Non-Noetherian Ordinary Schemes, Contemporary Mathematics, vol. 244, American Mathematical Society, 1999, pp.~3--90.

\bibitem[Vin79]{vinogradov1979some}
Alexandre~M Vinogradov, \emph{Some homology systems associated with the differential calculus in commutative algebras}, Uspekhi Matematicheskikh Nauk \textbf{34} (1979), no.~6(210), 145--150 (Russian), translation in Russian Mathematical Surveys \textbf{34} (1979), no. 6, 250--255.

\bibitem[Yan22]{yang2022dmodules}
Haiping Yang, \emph{{$D$}-modules on singular varieties and {H}ochschild homology of quantisations}, Ph.D. thesis, Imperial College London, 2022.

\end{thebibliography}

\end{document}